\newtheorem{theorem}{Theorem}
\theoremstyle{definition}
\begin{document}

\begin{tikzpicture}[remember picture,overlay]
\node[anchor=north east,inner sep=20pt] at (current page.north east)
{\includegraphics[scale=0.2]{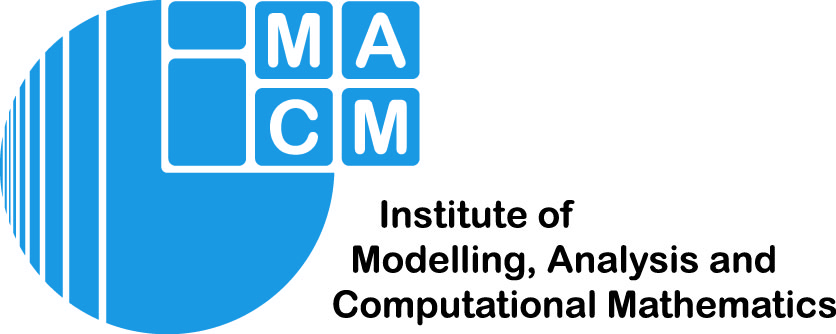}};
\end{tikzpicture}
	
\begin{frontmatter}

\title{Dynamical Analysis of a Cocaine-Heroin Epidemiological Model\\
with Spatial Distributions}

\author[AMNEA]{Achraf Zinihi}
\ead{a.zinihi@edu.umi.ac.ma} 

\author[AMNEA]{Moulay Rchid Sidi Ammi}
\ead{rachidsidiammi@yahoo.fr}

\author[BUW]{Matthias Ehrhardt\corref{Corr}}
\cortext[Corr]{Corresponding author}
\ead{ehrhardt@uni-wuppertal.de}

\author[CS]{Ahmed Bachir}
\ead{abishr@kku.edu.sa}

\address[AMNEA]{Department of Mathematics, MAIS Laboratory, AMNEA Group, Faculty of Sciences and Technics,\\
Moulay Ismail University of Meknes, Errachidia 52000, Morocco}

\address[BUW]{University of Wuppertal, Chair of Applied and Computational Mathematics,\\
Gaußstrasse 20, 42119 Wuppertal, Germany}

\address[CS]{Department of Mathematics, College of Science, King Khalid University,\\
Abha, Saudi Arabia.}


\begin{abstract}
This article conducts an in-depth investigation of a new spatio-temporal model for the cocaine-heroin epidemiological model with vital dynamics, incorporating the Laplacian operator. 
The study rigorously establishes the existence, uniqueness, 
non-negativity, and boundedness of solutions for the proposed model. 
In addition, the local stability of both a drug-free equilibrium and a drug-addiction equilibrium are analyzed by studying the corresponding characteristic equations. 
The research provides conclusive evidence that when the basic reproductive number $\mathcal{R}_0$ exceeds 1, the drug-addiction equilibrium is globally asymptotically stable. 
Conversely, using comparative arguments, it is shown that if $\mathcal{R}_0$ is less than 1, the drug-free equilibrium is globally asymptotically stable. 
Furthermore, the article includes a series of numerical simulations to visually convey and support the analytical results.
\end{abstract}

\begin{keyword}
epidemiological model \sep cocaine-heroin model
\sep spatio-temporal dynamics \sep stability analysis
\sep numerical simulations \sep drug-addiction model\\
\textit{2020 Mathematics Subject Classification.} 92D30, 93D05, 35K40, 65D15.
\end{keyword}



\end{frontmatter}

\section{Introduction}\label{S1}

In recent years, reports, e.g.\ from UN \cite{united2023world} or EU \cite{european2023european} have consistently highlighted the continued and persistent nature of cocaine and heroin use on a global scale.
Cocaine and heroin are both potent psychoactive substances with complex histories, each originating from different sources and cultures \cite{Booth1996, Gootenberg1999, OToole2015, Streatfeild2003}. 
Cocaine, derived from the leaves of the coca plant, has a rich history in South America, particularly among indigenous populations in regions such as the Andes. 
The use of coca leaves for medicinal and stimulant purposes dates back thousands of years among these communities. 
The indigenous people of the Andes chewed coca leaves to relieve fatigue and increase stamina, and the plant had cultural and religious significance. 
The earliest recorded use of coca leaves can be traced back to the ancient Inca civilizations \cite{Gootenberg1999, Streatfeild2003}.

Heroin, on the other hand, is a semi-synthetic opioid derived from Morphine, which in turn is derived from the opium poppy. 
The history of opium, and by extension heroin, is deeply rooted in ancient civilizations. 
The opium poppy was cultivated and used for its medicinal properties in ancient Mesopotamia, and the knowledge spread to various cultures, including ancient Greece and Rome. However, the synthesis of heroin itself is a more recent development, dating back to the late 19th century. 
Heroin was first synthesized by C.R. Alder Wright in 1874 and later re-synthesized by Heinrich Dreser at Bayer Laboratories in 1897. 
Its early use was as a potential substitute for morphine, which was widely used but also associated with addiction \cite{Booth1996, OToole2015}.

In recent decades, mathematical modeling has proven to be a valuable tool for studying infectious diseases that pose significant health threats to humanity. However, the societal impact of illicit drug abuse, including substances such as cocaine and heroin, demands increased attention from public health agencies. 
While mathematical models traditionally used for infectious diseases have been successfully applied to the study of drug-addiction, it is critical to extend this application to address the unique challenges posed by drug abuse.

Mackintosh and Stewart \cite{Mackintosh1979}, demonstrated the applicability of classical epidemic models in understanding the dynamics of drug-addiction and formulating effective control strategies. 
Building on this work, White and Comiskey \cite{White2007} proposed an ODE mathematical model that categorizes the population into susceptible individuals ($S$), drug users not in treatment ($D_1$), and drug users in treatment ($D_2$).
The researchers defined the basic reproduction number $\mathcal{R}_0$ and performed sensitivity analyses. 
In addition, they investigated the existence of backward bifurcation at the drug-addiction equilibrium when $\mathcal{R}_0 = 1$ and concluded that preventive measures are more effective than curative interventions in combating drug abuse. 
In addition, several delay differential equations (DDEs) and fractional differential equations (FDEs) models have been proposed and studied in \cite{Kundu2021, Zhang2019, Zinihi2024MM, Zinihi2024OC}, with a focus on global dynamics and Hopf bifurcation within these models.
This approach not only expands the application of mathematical modeling to drug-addiction, 
but also underscores the importance of tailored interventions and prevention strategies to mitigate the societal impact of drug abuse.

However, the models discussed above predominantly operate in a homogeneous environment. 
Recognizing the importance of host population mobility and spatial heterogeneity in disease spread modeling, it becomes imperative to consider these factors for a more complete understanding. 
A notable example is the work of \cite{SidiAmmi2023}, who introduced a diffusive SIR epidemic model and explored the effects of spatial heterogeneity and vaccination strategies on disease persistence and extinction. 
Similarly, \cite{Song2019} explored a SEIRS model in a heterogeneous environment, highlighting the substantial influence of movement and spatial heterogeneity among exposed and recovered individuals on disease dynamics. 
Additional insights into such models can be found in the literature \cite{Yang2019, Wang2020}.

In this paper, we propose and analyze a cocaine-heroin model with spatial distributions, based on reaction-diffusion equations that describe how substances (such as drugs) spread and interact in space.
Here, we describe the dynamics of cocaine and heroin concentrations in a population, and the spatial distributions refer to how these concentrations vary across locations (e.g., neighborhoods, cities). 
The reaction terms capture interactions such as drug use, relapse, and potential immunization effects, and the diffusion coefficients determine how quickly the drugs spread through space.
Such models are used to study drug epidemics, predict the spread of addiction, and the impact of public health interventions in specific areas.
By analyzing the stability and behavior of drug-free equilibria, they can assess the effectiveness of control measures.

Given the recognized importance of spatial heterogeneity in disease spread modeling, its inclusion is considered essential. 
To address the nuances of heroin spread in the context of spatial heterogeneity, inspired by the model proposed by \cite{White2007}, \cite{Duan2020} introduced and examined an age-structured heroin epidemic model incorporating reaction-diffusion dynamics. 
The threshold dynamics of this model were examined in terms of the basic reproduction number $\mathcal{R}_0$. 
Another paper by \cite{Wang2020} presented a heroin epidemic model incorporating reaction-diffusion dynamics. 
Using next-generation operator theory, they defined the basic reproduction number $\mathcal{R}_0$ as a critical threshold that determines the extinction or persistence of the heroin epidemic. 
Further insights into the dynamics, especially for the critical case where $\mathcal{R}_0$ equals 1, were refined by \cite{Xu2021}.

Previous research on dynamic models of drug abuse has focused primarily on single substances, particularly heroin. 
However, it is important to recognize that cocaine and heroin users represent overlapping populations, with many individuals using both substances simultaneously \cite{Harrell2012}.
These significant bidirectional influences were also observed in a study in Baltimore, Maryland \cite{bohnert2009social} that demonstrated the need for joint modeling of cocaine and heroin addiction. The follow-up study \cite{Sauer2021} underlined the importance of spatio-temporal modeling.
It is worth mentioning that Benneyan et al.\ \cite{benneyan2017modeling} proposed an ODE model for the opioid and heroin co-epidemic crisis.
In examining the interplay between the cocaine and heroin epidemics, \cite{Chekroun2017} introduced and analyzed an age-structured model that included both drugs. 
The study explored threshold dynamics in relation to the basic reproduction number. 
Notably, spatial heterogeneity and host population movement were not considered in \cite{Chekroun2017}.
To our knowledge, there is a notable gap in the literature regarding models of cocaine-heroin coinfection that incorporate population density variation over both time and spatial variables.
Addressing this gap is critical for a more nuanced understanding of the complex dynamics of cocaine-heroin co-epidemics, and highlights the need for comprehensive models that account for both substance interactions and the spatio-temporal distribution of the affected population.

The incorporation of the Laplacian operator into the cocaine-heroin mathematical model serves as a crucial motivation to capture the spatial diffusion dynamics inherent in the spread of the cocaine-heroin epidemic. 
In this model, the random movement of individuals through space is a significant factor contributing to the spread of the epidemic. 
The diffusion coefficient ($d > 0$) assigned to each compartment $S$ (susceptible), $C$ (cocaine users), $H$ (heroin users), and $R$ (recovered) reflects the different rates at which individuals move within the spatial domain. 
This operator provides a mathematical representation of how the concentration of each compartment changes across space, incorporating the spatial interactions that influence the spread of the cocaine-heroin epidemic.
This model allows for a holistic examination of the complex dynamics involved in the spatial spread of the epidemic. 
By considering spatial interactions and individual movements, the model provides valuable insights into the spatial aspects of disease transmission.
As a result, the enhanced cocaine-heroin model contributes to a more thorough understanding of the spatial spread of the epidemic. 
This understanding is essential for informed decision-making and the development of proactive measures in public health planning and response. 
The incorporation of the Laplacian operator enriches the mathematical representation of the cocaine-heroin epidemic, making the model a powerful tool for exploring and addressing the spatial dimensions of this public health challenge.

This study is organized as follows. In Section~\ref{S2}, we introduce a cocaine-heroin SCHR model that incorporates essential concepts related to the Laplacian operator. 
This section lays the foundation for the analyses that follow. 
In addition, Section~\ref{S3} systematically establishes the existence, uniqueness, and boundedness of a globally non-negative solution to our epidemiological problem, thus strengthening the theoretical framework of the investigation. 
Sections~\ref{S4}, \ref{S5}, and \ref{S6} are devoted to a qualitative evaluation of the spatial model, emphasizing the determination of stability properties at both local and global scales. 
In Section~\ref{S7}, we extend the proposed cocaine-heroin model by including two compartments for cocaine and heroin users in treatment. A corresponding analysis of the extended model is presented.
The paper includes a series of compelling numerical experiments in Section~\ref{S8}. 
Finally, Section~\ref{S9} summarizes this work with a comprehensive summary of the main results.


\section{Mathematical model}\label{S2}
Let $\mathcal{U}\subset \mathbb{R}^2$ be a bounded domain with a smooth boundary denoted by $\partial \mathcal{U}$. 
Let $b\in\mathbb{R}^*_+$. 
The transmission coefficients of the cocaine-heroin model are given in Table~\ref{Tab1}.
In contrast, Figure~\ref{F1} contains the transfer diagram for the proposed model.

\begin{table}[hbtp]
\centering
\setlength{\tabcolsep}{0.5cm}
\caption{Transmission coefficients for the SCHR model.}\label{Tab1}
\begin{tabular}{cc}
\hline 
\quad Symbol \quad & \quad Description \quad \\
\hline \hline 
$\eta_i$ & Death rates of $S, H, C,$ and $R$\\ 
\hline
$d$ & Diffusion rate of $S, H, C,$ and $R$\\
\hline
$\gamma_j$ & Natural recovery rates of $H$ and $C$\\
\hline
$\sigma$ & Recovery rate of $C$ to $H$\\
\hline
$\Lambda$ & Recruitment rate of the population\\
\hline
$\beta$ & Transmission rate\\ 
\hline
\end{tabular}
\end{table}

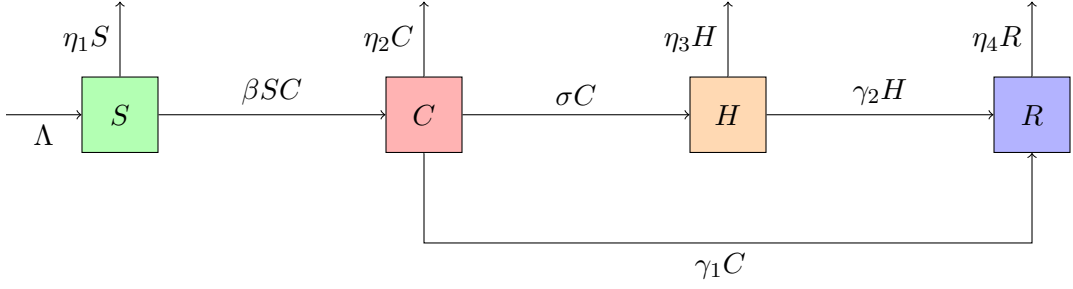
\begin{figure}[hbtp]
\centering
\begin{tikzpicture}[node distance=4cm]
\node (S) [rectangle, draw, minimum size=1cm, fill=green!30] {$S$}; 
\node (C) [rectangle, draw, minimum size=1cm, fill=red!30, right of=S] {$C$};
\node (H) [rectangle, draw, minimum size=1cm, fill=orange!30, right of=C] {$H$};
\node (R) [rectangle, draw, minimum size=1cm, fill=blue!30, right of=H] {$R$}; 
\draw[->] (-1.5,0) -- ++(S) node[midway,below]{$\Lambda$};
\draw [->] (S) -- (C) node[midway,above]{$\beta SC$};
\draw [->] (C) -- (H) node[midway,above]{$\sigma C$};
\draw [->] (H) -- (R) node[midway,above]{$\gamma_2 H$};
\draw [->] (C.south) -| (4,-1.7) -- (11.8,-1.7) node[midway,below]{$\gamma_1 C$} -| (R.south);
\draw[->] (S.north) -| (0,1.5) node[near end,left]{$\eta_1 S$};
\draw[->] (C.north) -| (4,1.5) node[near end,left]{$\eta_2 C$};
\draw[->] (H.north) -| (8,1.5) node[near end,left]{$\eta_3 H$};
\draw[->] (R.north) -| (12,1.5) node[near end,left]{$\eta_4 R$};
\end{tikzpicture}
\captionof{figure}{Transfer diagram for the proposed SCHR model.}\label{F1}
\end{figure}

Given the assumptions outlined above, the reaction-diffusion cocaine-heroin model can be formulated as follows
\begin{equation}\label{E2.1}
\left\{\begin{aligned}
\partial_t S - d \Delta S &= \Lambda - \beta S C - \eta_1 S, \\
\partial_t C - d \Delta C &= \beta S C - (\eta_2+\sigma+\gamma_1) C,\\ 
\partial_t H - d \Delta H &= -(\eta_3+\gamma_2) H + \sigma C,\\ 
\partial_t R - d \Delta R &= \gamma_1 C + \gamma_2 H - \eta_4 R,
\end{aligned}\right. \ \text{ in } \Omega_b = [0, b]\times\mathcal{U}, 
\end{equation}
with the no-flux boundary conditions ($\nu$ denote the outer unit normal) and the initial conditions
\begin{equation}\label{E2.2} 
\left\{\begin{aligned}
&\frac{\partial S}{\partial \nu} = \frac{\partial C}{\partial \nu} = \frac{\partial H}{\partial \nu} = \frac{\partial R}{\partial \nu} = 0, \ \text{ on } \Sigma_b = [0, b]\times\partial\mathcal{U},\\
&S(0,x)=S^0, \ C(0,x)=C^0, \ H(0,x)=H^0, \ R(0,x)=R^0, \ \text{ in } \mathcal{U}.
\end{aligned}\right.
\end{equation}

Let $\psi = (\psi_1, \psi_2, \psi_3, \psi_4) = (S, C, H, R)$, 
$\psi^0 =( \psi^0_1 ,\psi^0_2 , \psi^0_3, \psi^0_4)=(S^0, C^0, H^0, R^0)$, 
$\mathbb{X}(\mathcal{U}) = \bigl(L^2(\mathcal{U})\bigr)^4$, and $\mathcal{B}$ defined as follows
\begin{equation}\label{E2.3}
   \mathcal{B}\colon \ 
\begin{aligned}
&\mathcal{D}_\mathcal{B} = \Big\{\upsilon \in \bigl( H^2(\mathcal{U}) \bigr)^4 \mid \ \frac{\partial \upsilon_i}{\partial \nu} = 0,\; 1\leq i\leq 4\Big\} \subset \mathbb{X}(\mathcal{U}) \longrightarrow \mathbb{X}(\mathcal{U}),\\ 
&\upsilon \longrightarrow -d\Delta \upsilon = (-d\Delta \upsilon_i)_{i=1,2,3,4}.
\end{aligned}
\end{equation}
We introduce the function $\xi$, which is defined by
\begin{equation*}
   \xi(\psi(t))=\bigl(\xi_1(\psi(t)),\,\xi_2(\psi(t)),\,\xi_3(\psi(t)),\,\xi_4(\psi(t))\bigr), \ t\in [0, b],
\end{equation*}
with $\psi(t)(\cdot) = \psi(t,\cdot)$, and 
\begin{equation*}
\begin{cases}
  \xi_1(\psi(t)) = \Lambda - \beta \psi_1\psi_2 - \eta_1 \psi_1,\\
  \xi_2(\psi(t))= \beta \psi_1\psi_2 - (\eta_2 + \sigma + \gamma_1)\psi_2,\\
  \xi_3(\psi(t)) = - (\eta_3 + \gamma_2)\psi_3 + \sigma \psi_2,\\
  \xi_4(\psi(t))= \gamma_1\psi_2 + \gamma_2\psi_3 - \eta_4\psi_4.
\end{cases}
\end{equation*}
Subsequently, \eqref{E2.1}--\eqref{E2.2} can be reformulated in  $\mathbb{X}( \mathcal{U})$ as follows
\begin{equation}\label{E2.4}
\begin{cases}
\partial_t \psi(t) + \mathcal{B}\psi(t) = \xi(\psi(t)),\\
\psi(0)=\psi^0.
\end{cases} \ t\in[0, b],
\end{equation}


\section{Existence of the strong solution}\label{S3}
Let $(t, x) \in \Omega_b$. The function $\xi$ exhibits Lipschitz continuity in $\psi$ uniformly with respect to $t \in[0, b]$. Since $-\Delta$ is obviously strongly elliptic (\cite{Pazy1983}), 
according to \cite{Barbu1994, Pazy1983}, the system \eqref{E2.4} admits a unique strong solution $\psi\in W^{1,2}(0, T ; \mathbb{X}(\mathcal{U}))$ with
\begin{equation*}
\psi_i \in L^2(0, T ; H^2(\mathcal{U})) \cap L^\infty(0, T ; H^1(\mathcal{U})), \ \forall i\in\{1, 2, 3, 4\}.
\end{equation*}
This result allows us to establish the following theorem.

\begin{theorem}
The problem \eqref{E2.1}--\eqref{E2.2} admits a unique non-negative global solution $\psi\in W^{1,2} (0, T ; \mathbb{X}(\mathcal{U}))$. 
Furthermore, for each $i\in\{1, 2, 3, 4\}$ we get
\begin{equation*}
   \psi_i\in L^2(0, T ; H^2(\mathcal{U})) \cap L^\infty(0, T ; H^1(\mathcal{U})) \cap L^\infty(\Omega_b).
\end{equation*}
\end{theorem}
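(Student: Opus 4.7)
The plan is to build on the local well-posedness already asserted through the Pazy/Barbu semigroup framework and upgrade it to a globally defined, non-negative solution with the stated regularity. The argument splits into three stages: (i) non-negativity of the local strong solution, (ii) a uniform $L^\infty$ a priori bound derived from a scalar equation for the total population, and (iii) extension to $[0,T]$ combined with the inherited parabolic regularity.

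For the non-negativity, I would rely on the quasi-positivity of the reaction vector field $\xi$: whenever one coordinate $\psi_i$ vanishes and the remaining coordinates are non-negative, the corresponding component of $\xi$ is non-negative. Indeed, $\xi_1\big|_{\psi_1=0}=\Lambda\ge 0$, $\xi_2\big|_{\psi_2=0}=0$, $\xi_3\big|_{\psi_3=0}=\sigma\psi_2\ge 0$, and $\xi_4\big|_{\psi_4=0}=\gamma_1\psi_2+\gamma_2\psi_3\ge 0$. Together with non-negative initial data and the no-flux boundary condition, the invariance of the non-negative cone can be established by testing each component equation against the negative part $\psi_i^-$, applying a Stampacchia-type truncation compatible with the $H^2$-regularity of strong solutions, and closing via Gr\"onwall's inequality to conclude that $\psi_i^-\equiv 0$.

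The analytic heart of the proof is the uniform $L^\infty$ bound. Summing the four equations in \eqref{E2.1} cancels the nonlinear coupling $\beta SC$ as well as the intermediate transfer terms $\sigma C$, $\gamma_1 C$, and $\gamma_2 H$, leaving for $N := S+C+H+R$ the scalar reaction-diffusion inequality
\begin{equation*}
\partial_t N - d\,\Delta N + \eta_\ast N \;\le\; \Lambda \quad \text{in } \Omega_b, \qquad \frac{\partial N}{\partial\nu}\Big|_{\partial\mathcal{U}}=0,
\end{equation*}
with $\eta_\ast := \min_{1\le i\le 4}\eta_i>0$. The parabolic comparison principle for this scalar Neumann problem then yields
\begin{equation*}
\|N(t,\cdot)\|_{L^\infty(\mathcal{U})} \;\le\; \max\bigl(\|N(0,\cdot)\|_{L^\infty(\mathcal{U})},\; \Lambda/\eta_\ast\bigr) \quad \text{for all } t\in[0,b],
\end{equation*}
so by non-negativity each $\psi_i$ inherits the same uniform bound, giving $\psi_i\in L^\infty(\Omega_b)$.

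With this bound in hand, finite-time blow-up is excluded and the local solution extends to all of $[0,T]$; the regularity $\psi_i\in L^2(0,T;H^2(\mathcal{U}))\cap L^\infty(0,T;H^1(\mathcal{U}))$ is then exactly the strong-solution regularity furnished by the semigroup machinery already cited before the theorem statement. I expect the main obstacle to be the careful PDE-level justification of the non-negativity step, because $\xi$ is only locally Lipschitz through the bilinear term $\beta SC$; handling this cleanly requires either a truncation of the nonlinearity followed by a passage to the limit, or direct energy estimates on $\psi_i^-$ using the no-flux boundary condition, both of which must be reconciled with the strong-solution framework used throughout.
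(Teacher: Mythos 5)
Your proposal is correct in outline and reaches the same conclusions, but the key a priori estimate is obtained by a genuinely different route. The paper proves $\psi_i\in L^\infty(\Omega_b)$ component by component, comparing $\psi_1$ with the barriers $\pm\bigl(\delta_1 t+\|\psi_1^0\|_{L^\infty(\mathcal{U})}\bigr)$ via the variation-of-constants formula for the semigroup generated by $d\Delta$; this yields only a linearly growing bound $|\psi_i(t,x)|\le\delta_1 t+\|\psi_i^0\|_{L^\infty(\mathcal{U})}$ and, as written, presupposes a bound on $\|\xi_1\|_{L^\infty(\Omega_b)}$, i.e.\ on the product $\psi_1\psi_2$. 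You instead sum the four equations, exploit the exact cancellation of $\beta SC$, $\sigma C$, $\gamma_1 C$, $\gamma_2 H$, and apply the scalar parabolic comparison principle to $N=S+C+H+R$, obtaining the time-uniform bound $\|N(t,\cdot)\|_{L^\infty(\mathcal{U})}\le\max\bigl(\|N^0\|_{L^\infty(\mathcal{U})},\Lambda/\eta_*\bigr)$. This is cleaner, sharper (uniform in $t$, hence directly usable for the asymptotic analysis in later sections), and avoids the near-circularity of the paper's choice of $\delta_1$; its price is that it needs non-negativity \emph{first}, since $\Lambda-\sum_i\eta_i\psi_i\le\Lambda-\eta_* N$ only holds on the non-negative cone. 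On the non-negativity step the two arguments essentially coincide for $C$ and $R$ (testing against $\psi_i^-$ and Gr\"onwall), while the paper handles $S$ and $H$ by an invariant-region argument quoting Smoller rather than by truncation. One point you should make explicit: in your ordering, the Gr\"onwall estimate $\frac{1}{2}\frac{d}{dt}\|\psi_2^-\|_{L^2}^2\le\beta\int_{\mathcal{U}}\psi_1(\psi_2^-)^2\,dx$ cannot yet invoke a global $L^\infty$ bound on $\psi_1$; it closes because the local strong solution already satisfies $\psi_1\in L^2(0,T;H^2(\mathcal{U}))\hookrightarrow L^2(0,T;L^\infty(\mathcal{U}))$ in two space dimensions, so $\int_0^t\|\psi_1(s)\|_{L^\infty(\mathcal{U})}\,ds<\infty$ on the maximal interval of existence. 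You flag this difficulty at the end, and with that detail supplied the argument is complete.
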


\begin{proof}
We want to show the boundedness of $\psi$ in the setting of $\Omega_b$. Let $\{\mathcal{S}_1(t), t \geq 0\}$ be the $C_0$ semigroup generated by the operator component $\mathcal{B}_1$, where $\mathcal{B}_1 \psi = d\Delta \psi_1$, and
\begin{equation*}
   \delta_1 = \max\Bigl\{ \bigl\|\xi_1\bigr\|_{L^{\infty}(\Omega_b))}, \bigl\|\psi_1^0\bigr\|_{L^{\infty}(\mathcal{U})} \Bigr\}.
\end{equation*}
It is apparent that the function
\begin{equation*}
   \phi_1(t, x)=\psi_1- \delta_1 t-\bigl\|\psi_1^0\bigr\|_{L^{\infty}(\mathcal{U})},
\end{equation*}
satisfies the Cauchy problem
\begin{equation}\label{E3.1}
\begin{cases}
  \partial_t \phi_1 (t, x)=d \Delta \phi_1+\xi_1(\psi(t))- \delta_1,  \\
  \phi_1(0, x) = \psi_1^0 -\bigl\|\psi_1^0\bigr\|_{L^{\infty}(\mathcal{U})},
\end{cases} \ t \in[0, b].
\end{equation}
The strong solution of \eqref{E3.1} is given by
\begin{equation*}
\phi_1(t)= \mathcal{S}_1(t) \Bigl(\psi_1^0 - \bigl\|\psi_1^0\bigr\|_{L^{\infty}(\mathcal{U})} \Bigr) 
+\int_0^t \mathcal{S}_1(t-s) \bigl(\xi_1(\psi(t))- \delta_1\bigr) \,ds,
\end{equation*}
Since $\psi_1^0-\bigl\|\psi_1^0\bigr\|_{L^{\infty}(\mathcal{U})} \leq 0$ and $\xi_1(\psi(t))- \delta_1 \leq 0$, 
it follows that $\phi_1(t, x) \leq 0$.
Similarly, the function 
\begin{equation*}
   \Tilde{\phi}_1(t, x) = \psi_1+ \delta_1 t + \bigl\|\psi_1^0\bigr\|_{L^{\infty}(\mathcal{U})},
\end{equation*}
satisfies the Cauchy problem
\begin{equation}\label{E3.2}
\begin{cases}
\partial_t \Tilde{\phi}_1 (t, x)=d \Delta \Tilde{\phi}_1 + \xi_1(\psi(t))+ \delta_1,\\
\Tilde{\phi}_1(0, x)=\psi_1^0 + \bigl\|\psi_1^0\bigr\|_{L^{\infty}(\mathcal{U})}.
\end{cases} \ t \in[0, b],
\end{equation}
The strong solution of \eqref{E3.2} is expressed as
\begin{equation*}
\Tilde{\phi}_1(t)= \mathcal{S}_1(t) \Bigl(\psi_1^0 - \bigl\|\psi_1^0\bigr\|_{L^{\infty}(\mathcal{U})} \Bigr) 
+ \int_0^t \mathcal{S}_1(t-s) \bigl(\xi_1(\psi(t))+ \delta_1\bigr) \,ds.
\end{equation*}
Since $\psi_1^0 + \bigl\|\psi_1^0\bigr\|_{L^{\infty}(\mathcal{U})} \geq 0$ and $\xi_1(y(t))+ \delta_1 \geq 0$, it follows that $\Tilde{\phi}_1(t, x) \geq 0$. Consequently,
\begin{equation*}
   \bigl|\psi_1(t, x)\bigr| \leq \delta_1 t + \bigl\|\psi_1^0\bigr\|_{L^{\infty}(\mathcal{U})},
\end{equation*}
and analogously 
\begin{equation*}
   \bigl|\psi_j(t, x)\bigr| \leq \delta_1 t + \bigl\|\psi_j^0\bigr\|_{L^{\infty}(\mathcal{U})}, \text { for } j = 2, 3, 4.
\end{equation*}
Thus, we have established  that $\psi_i \in L^{\infty}(\Omega_b)$.

Let's establish the non-negativity of $\psi$. 
Starting with $\psi_4$, by expressing $\psi_4 = \psi_4^+ - \psi_4^-$, where
\begin{equation*}
   \psi_4^+(t, x) = \sup\{\psi_2(t, x), 0\} \text{ and } \psi_4^-(t, x) = \sup\{-\psi_4(t, x), 0\}.
\end{equation*}
If we multiply the equation corresponding to $i=2$ in \eqref{E2.4} with $\psi_4^-$, we get
\begin{equation*}
   -\frac{1}{2} \frac{d}{d t} \bigl\|\psi_4^-\bigr\|_{L^2(\mathcal{U})}^2
   = d\int_{\mathcal{U}}\bigl|\nabla \psi_4^-\bigr|^2 \,dx
     - \gamma_1\int_{\mathcal{U}} \psi_2 \psi_4^- \,dx 
     - \gamma_2\int_{\mathcal{U}} \psi_3 \psi_4^- \,dx 
     + \eta_4\int_{\mathcal{U}} (\psi_4^-)^2 \,dx,
\end{equation*}
which can be rewritten as
\begin{equation*}
    \frac{1}{2} \frac{d}{d t} \bigl\|\psi_4^-\bigr\|_{L^2(\mathcal{U})}^2 \leq \gamma_1 \int_{\mathcal{U}} \psi_2 \psi_4^- \,dx
    + \gamma_2\int_{\mathcal{U}} \psi_3 \psi_4^- \,dx .
\end{equation*}
Using the Cauchy-Schwarz inequality, we get
\begin{equation*}
   \frac{d}{d t} \bigl\|\psi_4^-\bigr\|_{L^2(\mathcal{U})}^2 
   \leq C (\bigl\|\psi_2\bigr\|_{L^2(\mathcal{U})}^2 
   + \bigl\|\psi_3\bigr\|_{L^2(\mathcal{U})}^2) 
     \bigl\|\psi_4^-\bigr\|_{L^2(\mathcal{U})}^2.
\end{equation*}
Recall that $\psi_i \in L^{\infty}(\Omega_b)$, applying Gronwall's inequality, we get
\begin{equation*}
    \bigl\|\psi_4^-\bigr\|_{L^2(\mathcal{U})}^2 \leq 0,
\end{equation*}
leading to the conclusion that $\psi_4^- = 0$. 
Consequently, $\psi_4 \geq 0$ in $\Omega_b$.

Using a similar methodology applied to $\psi_2$, we derive the following expression
\begin{equation*}
-\frac{1}{2} \frac{d}{d t}\bigl\|\psi_2^-\bigr\|_{L^2(\mathcal{U})}^2 =    
  d\int_{\mathcal{U}}\bigl|\nabla \psi_2^-\bigr|^2 \,dx 
 - \beta \int_{\mathcal{U}} \psi_1 (\psi_2^-)^2 \,dx 
 + (\eta_2 + \sigma + \gamma_1)\int_{\mathcal{U}} (\psi_2^-)^2 \,dx,
\end{equation*}
which can be expressed as
\begin{equation*}
\frac{1}{2} \frac{d}{d t} \bigl\|\psi_2^-\bigr\|_{L^2(\mathcal{U})}^2 
\leq \beta \int_{\mathcal{U}} \psi_1 (\psi_2^-)^2 \,dx
\leq \beta N \bigl\|\psi_2^-\bigr\|_{L^2(\mathcal{U})}^2.
\end{equation*}
Applying Gronwall's inequality, we obtain
\begin{equation*}
   \bigl\|\psi_2^-\bigr\|_{L^2(\mathcal{U})}^2 \leq 0,
\end{equation*}
which implies $\psi_2^- = 0$. Then $\psi_2 \geq 0$.

Let
\begin{align*}
  G_1(\psi_1, \psi_3) &= \Lambda - \beta \psi_1\psi_2 - \eta_1 \psi_1,\\
  G_3(\psi_1, \psi_3) &= - (\eta_3 + \gamma_2)\psi_3 + \sigma \psi_2.
\end{align*}
To move to the next step in our argument, we turn our attention to the system
\begin{equation}\label{E3.3}
\begin{cases}
  \partial_t \psi_j = d \Delta \psi_j + G_j(\psi_1, \psi_3), \vspace{0.15cm}\\
  \psi_j(0)=\psi_j^0,
\end{cases} \ j=1,3.
\end{equation}
It is obvious that these functions are continuously differentiable, with $G_1(0, \psi_3) = \Lambda$ and $G_3(\psi_1, 0) = \sigma_1\psi_2$ for all $\psi_1, \psi_3$. 
Given the non-negativity of the initial conditions of \eqref{E3.3}, and in accordance with \cite{Smoller2012}, we establish the non-negativity of $\psi_1$ and $\psi_3$ in $\Omega_b$.
\end{proof}

\begin{theorem}
Let $\psi$ be the solution of \eqref{E2.1}--\eqref{E2.2}. Then
\begin{equation*}
   \Bigl\|\frac{\partial \psi_i}{\partial t}\Bigr\|_{L^2(\Omega_b)}
   +\bigl\|\psi_i\bigr\|_{L^2(0, T ; H^2(\mathcal{U}))}
   +\bigl\|\psi_i\bigr\|_{H^1(\mathcal{U})} 
   +\bigl\|\psi_i\bigr\|_{L^{\infty}(\Omega_b)} < \infty.
\end{equation*}
\end{theorem}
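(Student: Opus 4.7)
The plan is to combine the information already provided by the preceding theorem, which gives both $\psi_i \in L^\infty(\Omega_b)$ and the regularity $\psi_i \in L^2(0,T;H^2(\mathcal{U})) \cap L^\infty(0,T;H^1(\mathcal{U}))$, with one additional energy estimate used to control $\partial_t \psi_i$ in $L^2(\Omega_b)$. In particular, the pointwise bounds $|\psi_i(t,x)| \leq \delta_i\,t + \|\psi_i^0\|_{L^\infty(\mathcal{U})}$ derived in the previous proof already imply $\|\psi_i\|_{L^\infty(\Omega_b)} < \infty$ on the finite interval $[0,b]$, and, since each $\xi_i(\psi)$ is at most quadratic in the $\psi_j$, the $L^\infty$ bound propagates to $\xi_i(\psi) \in L^\infty(\Omega_b) \subset L^2(\Omega_b)$.

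Next, I would test each equation of \eqref{E2.4} against $-\Delta \psi_i$ and use Green's formula together with the no-flux boundary conditions to obtain
\begin{equation*}
\frac{1}{2}\frac{d}{dt}\|\nabla \psi_i\|_{L^2(\mathcal{U})}^2 + d\,\|\Delta \psi_i\|_{L^2(\mathcal{U})}^2 = -\int_{\mathcal{U}} \xi_i(\psi)\,\Delta \psi_i\,dx.
\end{equation*}
A standard application of Young's inequality allows absorption of $\tfrac{d}{2}\|\Delta \psi_i\|_{L^2(\mathcal{U})}^2$ into the left-hand side, leaving a term proportional to $\|\xi_i(\psi)\|_{L^2(\mathcal{U})}^2$ on the right. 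Because $\xi_i(\psi)$ is bounded in $L^\infty(\Omega_b)$, integration in time and Gronwall's lemma yield $\nabla \psi_i \in L^\infty(0,T;L^2(\mathcal{U}))$ and $\Delta \psi_i \in L^2(\Omega_b)$. The $L^2(0,T;H^2(\mathcal{U}))$ bound then follows from elliptic regularity applied to the Neumann problem, which gives $\|\psi_i\|_{H^2(\mathcal{U})} \leq C\bigl(\|\Delta \psi_i\|_{L^2(\mathcal{U})} + \|\psi_i\|_{L^2(\mathcal{U})}\bigr)$.

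The control on the time derivative comes directly from the PDE itself, without an additional energy argument: rearranging \eqref{E2.4} gives $\partial_t \psi_i = d\Delta \psi_i + \xi_i(\psi)$, whence
\begin{equation*}
\Bigl\|\frac{\partial \psi_i}{\partial t}\Bigr\|_{L^2(\Omega_b)} \leq d\,\|\Delta \psi_i\|_{L^2(\Omega_b)} + \|\xi_i(\psi)\|_{L^2(\Omega_b)},
\end{equation*}
and both terms on the right are already finite by the previous step. Adding the four bounds and invoking a density argument (or approximating the initial data in $H^1(\mathcal{U})$) completes the proof.

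The main obstacle I anticipate is justifying the test with $-\Delta \psi_i$ rigorously: this requires sufficient regularity of $\psi_i$ so that $\Delta \psi_i \in L^2(\mathcal{U})$ is a legitimate multiplier, which in turn typically demands $\psi_i^0 \in H^1(\mathcal{U})$ and a standard approximation-and-limit procedure. Apart from this technicality, the estimates are standard once the $L^\infty$ bound from the previous theorem has linearized the nonlinear coupling among the four components.
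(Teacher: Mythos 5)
Your proposal is correct and follows essentially the same route as the paper: both arguments rest on the $L^\infty(\Omega_b)$ bound from the preceding theorem to put $\xi_i(\psi)$ in $L^2(\Omega_b)$, and both use the same integration-by-parts identity $\int_{\mathcal{U}}\partial_t\psi_i\,\Delta\psi_i\,dx=-\tfrac12\tfrac{d}{dt}\|\nabla\psi_i\|_{L^2(\mathcal{U})}^2$ under the Neumann conditions. The only cosmetic difference is that the paper squares the whole equation $\partial_t\psi_i-d\Delta\psi_i=\xi_i$ in $L^2(\Omega_b)$ and extracts the bounds on $\partial_t\psi_i$, $\Delta\psi_i$, and $\nabla\psi_i$ from a single identity, whereas you test against $-\Delta\psi_i$ with Young's inequality and then recover the $\partial_t\psi_i$ bound from the equation by the triangle inequality.
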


\begin{proof}
The first equation of \eqref{E2.4} gives
\begin{equation*}
\begin{aligned}
\int_0^t \int_{\mathcal{U}} \Bigl|\frac{\partial \psi_1}{\partial  \tau}\Bigr|^2 \,d\tau dx 
- 2 d \int_0^t \int_{\mathcal{U}} \frac{\partial \psi_1}{\partial \tau} \Delta \psi_1 \,d\tau dx 
&+ d^2 \int_0^t \int_{\mathcal{U}} |\Delta \psi_1|^2 \,d\tau dx \\
& =\int_0^t \int_{\mathcal{U}} (\Lambda - \beta \psi_1\psi_2 - \eta \psi_1)^2 \,d\tau dx.
\end{aligned} 
\end{equation*}
Because of
\begin{equation*}
  \int_0^t \int_{\mathcal{U}} \frac{\partial \psi_1}{\partial \tau} \Delta \psi_1 \,d\tau dx
    = \int_{\mathcal{U}}\Bigl(-\bigl|\nabla \psi_1\bigr|^2 + \bigl|\nabla \psi_1^0\bigr|^2\Bigr) \,dx,
\end{equation*}
we have
\begin{equation*}
\begin{aligned}
\int_0^t \int_{\mathcal{U}} \Bigl|\frac{\partial \psi_1}{\partial \tau}\Bigr|^2 \,d\tau dx
+ d^2 \int_0^t \int_{\mathcal{U}} \bigl|\Delta \psi_1\bigr|^2 \,d\tau dx 
& + 2 d \int_{\mathcal{U}} \bigl|\nabla \psi_1\bigr|^2 \,dx
-2 d \int_{\mathcal{U}} \bigl|\nabla \psi_1^0\bigr|^2 \,dx \\
& =\int_0^t \int_{\mathcal{U}} (\Lambda - \beta \psi_1\psi_2 - \eta \psi_1)^2 \,d\tau dx.
\end{aligned}
\end{equation*}
Due to the boundedness of $\psi_1^0 \in H^2(\mathcal{U})$ and $\|\psi_i\|_{L^{\infty}(Q)}$, the result holds for i = 1. 
Analogous reasoning can be applied to the remaining scenarios.
\end{proof}


\section{Equilibria of the proposed cocaine-heroin model}\label{S4}
To identify the equilibrium points, we set the right side of the system \eqref{E2.1} equal to zero.
This results in two equilibria in the coordinate space $(S, C, H, R)$. More precisely, $E_f=(\frac{\Lambda}{\eta_1}, 0, 0, 0)$ denotes the drug-free equilibrium, while $E^*=(S^*, C^*, H^*, R^*)$ denotes the drug-addiction equilibrium.

The Jacobian matrix of \eqref{E2.1} at the point $(S, C, H, R)$, excluding diffusion effects, is expressed as follows
\begin{equation*}
  \mathcal{J} = \begin{pmatrix}
  -\eta_1-\beta C & -\beta S & 0 & 0\\
  \beta C & \beta S-(\eta_2+\sigma+\gamma_1) & 0 & 0\\
  0 & \sigma & -(\eta_3+\gamma_2) & 0\\
  0 & \gamma_1 & \gamma_2 & -\eta_4
\end{pmatrix}.
\end{equation*}
This matrix $\mathcal{J}$ is decomposed into a sum of two matrices; the transmission matrix $\mathcal{T}$ and the transition matrix $\mathcal{K}$\footnote{$\mathcal{T}$ accounts for the number of new infections, while $\mathcal{K}$ is used to characterize movement between compartments.}, where
\begin{equation*}
   \mathcal{T} = \begin{pmatrix}
   -\beta C & -\beta S & 0 & 0\\
   \beta C & \beta S & 0 & 0\\
   0 & 0 & 0 & 0\\
   0 & 0 & 0 & 0
\end{pmatrix},
\end{equation*}
and
\begin{equation*}
\mathcal{K} = \begin{pmatrix}
-\eta_1 & 0 & 0 & 0\\
0 & -(\eta_2+\sigma+\gamma_1) & 0 & 0\\
0 & \sigma & -(\eta_3+\gamma_2) & 0\\
0 & \gamma_1 & \gamma_2 & -\eta_4
\end{pmatrix}.
\end{equation*}
Therefore, at the drug-free equilibrium point $E_f$, the basic reproduction number of \eqref{E2.1} is given by
\begin{equation*}
    \mathcal{R}_0 = \operatorname{trace}(-\mathcal{T}\mathcal{K}^{-1}) 
    = \frac{\beta \Lambda}{\eta_1 (\eta_2 + \sigma + \gamma_1)}.
\end{equation*}
In this case, we have
\begin{equation*}
    S^* = \frac{\Lambda}{\eta_1 \mathcal{R}_0}, 
    \quad C^* = \frac{\eta_1}{\beta} (\mathcal{R}_0 - 1), 
    \quad H^* = \frac{\eta_1\sigma }{\beta(\eta_3 + \gamma_2)} (\mathcal{R}_0 - 1), 
    \quad R^* = \frac{\eta_1}{\beta\eta_4} \Bigl(\gamma_1 + \frac{\sigma\gamma_2}{\eta_3 + \gamma_2}\Bigr) (\mathcal{R}_0 - 1).
\end{equation*}
It is noteworthy that the variable $R$ is absent from the first three equations in \eqref{E2.1}. 
This allows us to focus our analysis on the following reduced model
\begin{equation}\label{E4.1}
   \left\{\begin{aligned}
&\begin{aligned}
&\partial_t S - d \Delta S = \Lambda - \beta S C - \eta_1 S, \\
&\partial_t C - d \Delta C = \beta S C - (\eta_2+\sigma+\gamma_1) C,\\ 
&\partial_t H - d \Delta H = -(\eta_3+\gamma_2) H + \sigma C,
\end{aligned} \quad \text{ in } \Omega_b,\\
&\frac{\partial S}{\partial \nu} = \frac{\partial C}{\partial \nu} = \frac{\partial H}{\partial \nu} = 0, \quad \text{ on } \Sigma_b,\\ \vspace{0.15cm}
&\bigl(S(0,x), C(0,x), H(0,x)\bigr) = \bigl(S^0, C^0, H^0\bigr), \quad \text{ in } \mathcal{U}.
\end{aligned}\right.
\end{equation}


\section{Local stability of the equilibria}\label{S5}
Using the conventional approach of \cite{Pang2004}, consider $0=\lambda_1<\lambda_2<\lambda_3<\dots$ as the eigenvalues of the operator $-\Delta$ on $\mathcal{U}$, subject to homogeneous Neumann boundary conditions.

Let $\mathcal{H}(\mathcal{U}) = \bigl\{y=(S, C, H) \in[C(\bar{\mathcal{U}})]^3 \mid \frac{\partial S}{\partial \nu} = \frac{\partial C}{\partial \nu} = \frac{\partial H}{\partial \nu} = 0, \text{ on } \partial \mathcal{U}\bigr\}$, and let $\mathcal{H}_j$ represent the invariant subspace of $\mathcal{H}(\mathcal{U})$ associated with a given eigenvalue $\lambda_j$. 
This forms a direct sum $\oplus_{j=1}^{\infty} \mathcal{H}_j$.
The Jacobian matrix of \eqref{E4.1} at $(S, C, H)$, without diffusion, is given by
\begin{equation*}
  \mathcal{J} = \begin{pmatrix}
  -\eta_1-\beta C & -\beta S & 0\\
  \beta C & \beta S-(\eta_2+\sigma+\gamma_1) & 0\\
  0 & \sigma & -(\eta_3+\gamma_2)
\end{pmatrix}.
\end{equation*}
The linearization of \eqref{E4.1} is described by $\partial_t y = \mathcal{N} y$, where
\begin{equation*}
\mathcal{N} = \begin{pmatrix}
-d \lambda_j & 0 & 0\\
0 & -d \lambda_j & 0\\
0 & 0 & -d \lambda_j 
\end{pmatrix} + 
\begin{pmatrix}
-\eta_1-\beta C & -\beta S & 0\\
\beta C & \beta S-(\eta_2+\sigma+\gamma_1) & 0\\
0 & \sigma & -(\eta_3+\gamma_2)
\end{pmatrix}.
\end{equation*}
Then, the characteristic equation is given by
\begin{equation}\label{E5.1}
   ( X + d\lambda_j + \eta_3+\gamma_2 )
   \Bigl[ (X + d\lambda_j + \eta_1 + \beta C) \bigl(X + d\lambda_j + (\eta_2+\sigma+\gamma_1) -\beta S\bigr) + \beta^2 S C\Bigr] = 0.
\end{equation}
The local stability analysis of the drug-free equilibrium $E_f$ and the drug-addiction equilibrium $E^*$ can be established through the following theorem

\begin{theorem}
The drug-free equilibrium $E_f$ is locally asymptotically stable if $\mathcal{R}_0 < 1$ and unstable if $\mathcal{R}_0 > 1$. 
While the drug-addiction equilibrium $E^*$ is locally asymptotically stable if $\mathcal{R}_0 > 1$.
\end{theorem}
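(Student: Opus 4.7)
The plan is to exploit the factorization in the characteristic equation \eqref{E5.1}, which decouples an $H$-mode from a $2\times 2$ block coupling $S$ and $C$, and then analyze the two cases separately.

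For the drug-free equilibrium $E_f = (\Lambda/\eta_1, 0, 0, 0)$, I would substitute $C=0$ and $S=\Lambda/\eta_1$ into \eqref{E5.1}. The quadratic bracket collapses, since the cross term $\beta^2 S C$ vanishes and one factor no longer contains $\beta C$, producing three explicit roots
\begin{equation*}
X_1 = -(d\lambda_j + \eta_3+\gamma_2),\quad X_2 = -(d\lambda_j + \eta_1),\quad X_3 = -d\lambda_j - (\eta_2+\sigma+\gamma_1) + \tfrac{\beta\Lambda}{\eta_1}.
\end{equation*}
Rewriting the last root using $\beta\Lambda/\eta_1 = (\eta_2+\sigma+\gamma_1)\mathcal{R}_0$ gives $X_3 = -d\lambda_j - (\eta_2+\sigma+\gamma_1)(1-\mathcal{R}_0)$. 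If $\mathcal{R}_0 < 1$, then $X_3 < 0$ for every $j\geq 1$, so all roots on every invariant subspace $\mathcal{H}_j$ are negative; local asymptotic stability follows from the standard spectral argument of Pang and Wang (already invoked in the setup). If $\mathcal{R}_0 > 1$, taking $\lambda_1 = 0$ yields $X_3 > 0$, proving instability.

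For the drug-addiction equilibrium $E^*$, the crucial simplification is the identity $\beta S^* = \eta_2+\sigma+\gamma_1$, which comes directly from the second stationary equation with $C^*\neq 0$. Plugging this into \eqref{E5.1}, one factor is again the explicit $X + d\lambda_j + \eta_3+\gamma_2 < 0$, and the bracket reduces, upon setting $\mu = X + d\lambda_j$, to
\begin{equation*}
\mu^2 + (\eta_1+\beta C^*)\mu + \beta^2 S^* C^* = 0.
\end{equation*}
Using $\beta C^* = \eta_1(\mathcal{R}_0-1)$ and $\beta S^* = \eta_2+\sigma+\gamma_1$, the coefficient of $\mu$ equals $\eta_1\mathcal{R}_0 > 0$ and the constant equals $\eta_1(\mathcal{R}_0-1)(\eta_2+\sigma+\gamma_1) > 0$ whenever $\mathcal{R}_0 > 1$. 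By the Routh--Hurwitz criterion both roots $\mu_\pm$ have negative real part, hence $X_\pm = \mu_\pm - d\lambda_j$ do as well for every $j$, giving local asymptotic stability of $E^*$.

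The main obstacle I expect is purely bookkeeping: recognizing and applying the equilibrium relation $\beta S^* = \eta_2+\sigma+\gamma_1$ to eliminate the $-\beta S$ term in \eqref{E5.1}, and packaging the $\lambda_j$-dependent shift cleanly so that the Routh--Hurwitz conclusion transfers from $\mu$ back to $X$ uniformly in $j$. Once that shift is absorbed, everything reduces to signs of the coefficients, which are manifestly determined by the size of $\mathcal{R}_0$.
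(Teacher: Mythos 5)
Your proposal is correct and follows essentially the same route as the paper: explicit roots of the factored characteristic equation at $E_f$, and a sign analysis of the shifted quadratic $(X+d\lambda_j)^2+\alpha_1(X+d\lambda_j)+\alpha_2=0$ at $E^*$. In fact, your use of the equilibrium relations $\beta S^*=\eta_2+\sigma+\gamma_1$ and $\beta C^*=\eta_1(\mathcal{R}_0-1)$ makes explicit the positivity of $\alpha_1=\eta_1\mathcal{R}_0$ and $\alpha_2=\eta_1(\mathcal{R}_0-1)(\eta_2+\sigma+\gamma_1)$, which the paper merely asserts.
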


\begin{proof}
Evaluating \eqref{E5.1} at $E_f$, the equation becomes
\begin{equation}\label{E5.2}
     ( X + d\lambda_j + \eta_3+\gamma_2 ) (X + d\lambda_j + \eta_1 )
     \Bigl(X + d\lambda_j + (\eta_2+\sigma+\gamma_1) -\beta \frac{\Lambda}{\eta_1}\Bigr) = 0.
\end{equation}
    The roots of the equation \eqref{E5.2} are identified as $X_1 = -d\lambda_j - \eta_1$, 
    $X_2 = -d\lambda_j - (\eta_2+\sigma+\gamma_1) + \beta \frac{\Lambda}{\eta_1} = -d\lambda_j - (\eta_2+\sigma+\gamma_1)(1 - \mathcal{R}_0)$,
    and $X_3 = -d\lambda_j - (\eta_3+\gamma_2)$. 
    Therefore, the drug-free equilibrium $E_f$ is locally asymptotically stable if $\mathcal{R}_0 < 1$ and unstable if $\mathcal{R}_0 > 1$.
    
After evaluating \eqref{E5.1} at $E^*$, the characteristic equation can be expressed as
\begin{equation}\label{E5.3}
   (X + d\lambda_j)^2 + \alpha_1(X + d\lambda_j) + \alpha_2 =0,
\end{equation}
with
\begin{equation*}
   \alpha_1 = \eta_1 + (\eta_2 + \sigma + \gamma_1) + \beta (C^*-S^*) 
   \text{ and } 
   \alpha_2 = (\eta_1 + \beta C^*)(\eta_2+\sigma+\gamma_1) - \eta_1 \beta S^*.
\end{equation*}
In the scenario where $\mathcal{R}_0 > 1$, it can be deduced that $\alpha_1>0$ and $\alpha_2>0$. 
This implies that all roots of the characteristic equation \eqref{E5.3} have negative real parts, cf.\ \cite{Henry1981}. 
Thus, $E^*$ is locally asymptotically stable.
\end{proof}


\section{Global stability of the equilibria}\label{S6}
We turn our attention to the global stability analysis of the reaction-diffusion equations \eqref{E2.1}--\eqref{E2.2}, by following the approach outlined in \cite{Hattaf2013}.
We note that a global stability analysis for a pure heroin model was done by Zhang and Xing \cite{zhang2020stability}.

Considering the solution $\psi$ of \eqref{E2.1}--\eqref{E2.2}, we define a Lyapunov functional at $E_f$ for the associated ODE system as $\mathcal{F}_1(\psi) = C$. 
The time derivative of $\mathcal{F}_1$ is given by
\begin{equation*}
\begin{split}
\frac{d \mathcal{F}_1}{d t} 
& = \nabla \mathcal{F}_1(\psi) \cdot \xi(\psi)\\
& = \beta S C - (\eta_2+\sigma+\gamma_1) C 
= \bigl(\beta S - (\eta_2+\sigma+\gamma_1)\bigr)C\\
& \leq \Bigl(\beta \frac{\Lambda}{\eta_1} - (\eta_2+\sigma+\gamma_1)\Bigr)C\\ 
&= (\eta_2+\sigma+\gamma_1)\Bigl(\beta\, \frac{\Lambda}{\eta_1(\eta_2+\sigma+\gamma_1)} - 1\Bigr)C\\
& = (\eta_2+\sigma+\gamma_1) (\mathcal{R}_0 - 1)C
\end{split}
\end{equation*}
Following the methodology outlined in \cite{Hattaf2013}, we establish a Lyapunov functional for the system \eqref{E2.1}--\eqref{E2.2} at the drug-free equilibrium $E_f$ as
\begin{equation*}
    \mathcal{G}_1=\int_{\mathcal{U}} \mathcal{F}_1(\psi(t, x)) d x.
\end{equation*}
By computing the time derivative of $\mathcal{G}_1$ along the non-negative solution of \eqref{E2.1}--\eqref{E2.2}, we obtain
\begin{equation*}
    \frac{d \mathcal{G}_1}{d t} 
    \leq \int_{\mathcal{U}} (\eta_2+\sigma+\gamma_1) (\mathcal{R}_0 - 1)C \,dx.
\end{equation*}
We introduce a nonlinear Lyapunov function at $E^*$ of the associated ODE of \eqref{E2.1}--\eqref{E2.2}, as
\begin{equation*}
     \mathcal{F}_2(\psi) =  \Bigl(S-S^*-S^* \ln \frac{S}{S^*}\Bigr) 
     + \Bigl(C-C^*-C^* \ln \frac{C}{C^*}\Bigr).
\end{equation*}
The time derivative of this function can be expressed as
\begin{equation}\label{E6.1}
\begin{split}
\frac{d \mathcal{F}_2}{d t} &=  
\Bigl((\Lambda - \beta S C - \eta_1 S)-\frac{S^*(\Lambda - \beta S C - \eta_1 S)}{S}\Bigr) \\
&q\quad +\Bigl((\beta S C-(\eta_2+\sigma+\gamma_1) C)-\frac{C^*(\beta S C-(\eta_2+\sigma+\gamma_1) C)}{C}\Bigr) .
\end{split}
\end{equation}
Because of
\begin{equation}\label{E6.2}
\Lambda = \beta S^* C^* + \eta_1 S^* \ \text{ and } \ \quad \eta_2 + \sigma + \gamma_1 = \beta S^*.
\end{equation}
Then, substituting \eqref{E6.2} from \eqref{E6.1}, we obtain the following expression
\begin{equation*}
   \frac{d \mathcal{F}_2}{d t} = -\frac{\eta_1}{S} (S-S^*)^2
   -\frac{\beta C^*}{S}(S-S^*)^2.
\end{equation*}
Using similar techniques outlined in \cite{Hattaf2013}, we construct a Lyapunov functional for the system \eqref{E2.1}--\eqref{E2.2} at the epidemic equilibrium $E^*$, defined as
\begin{equation*}
   \mathcal{G}_2 = \int_{\mathcal{U}} \mathcal{F}_2(\psi(t, x)) \,dx.
\end{equation*}
By computing the time derivative of $\mathcal{G}_2$ along the non-negative solution of the model \eqref{E2.1}--\eqref{E2.2}, we obtain 
\begin{equation*}
    \frac{d \mathcal{G}_2}{d t} 
    = \int_{\mathcal{U}} \Bigl( -\frac{\eta}{S}(S-S^*)^2 
    -\frac{\beta C^*}{S}(S-S^*)^2 \Bigr) \,dx 
    - d \Bigl( S^* \int_{\mathcal{U}} \frac{|\nabla S|^2}{S^2}dx + C^* \int_{\mathcal{U}} \frac{|\nabla C|^2}{C^2}dx \Bigr).
\end{equation*}

According to the LaSalle's Invariance Principle \cite{LaSalle1976}, 
we can summarize the results of global stability in the following theorem

\begin{theorem}
The drug-free equilibrium $E_f$ is globally asymptotically stable if $\mathcal{R}_0 < 1$. While the drug-addiction equilibrium $E^*$ is globally asymptotically stable if $\mathcal{R}_0 > 1$.
\end{theorem}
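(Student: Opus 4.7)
The plan is to complete the argument by invoking LaSalle's Invariance Principle for infinite-dimensional dissipative systems, using the Lyapunov functionals $\mathcal{G}_1$ and $\mathcal{G}_2$ already constructed in this section. A key prerequisite -- precompactness of the forward orbits in a suitable function space -- is supplied by the $L^\infty(\Omega_b)$ and $H^1(\mathcal{U})$ bounds established in Section~\ref{S3}, combined with standard parabolic smoothing.

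For the drug-free equilibrium with $\mathcal{R}_0<1$, I would first verify that $\mathcal{F}_1(\psi)=C$ is non-negative and vanishes only when $C=0$. The inequality $\frac{d\mathcal{G}_1}{dt}\leq (\eta_2+\sigma+\gamma_1)(\mathcal{R}_0-1)\int_{\mathcal{U}} C\,dx$ then shows the derivative is non-positive and strictly negative unless $C\equiv 0$. Characterising the largest invariant set $\mathcal{M}\subset\{d\mathcal{G}_1/dt=0\}$ thus reduces to analysing trajectories with $C\equiv 0$: the $S$-equation becomes the linear heat problem $\partial_t S - d\Delta S = \Lambda-\eta_1 S$ with Neumann data, whose solution converges to the spatially constant state $\Lambda/\eta_1$; the $H$- and $R$-equations then decay exponentially to zero by a comparison argument. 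Hence $\mathcal{M}=\{E_f\}$ and LaSalle yields global attractivity, which combined with the local stability from Section~\ref{S5} gives global asymptotic stability.

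For the drug-addiction equilibrium with $\mathcal{R}_0>1$, positive-definiteness of $\mathcal{F}_2$ follows from the standard fact that $\phi(x)=x-x^*-x^*\ln(x/x^*)\geq 0$ with equality iff $x=x^*$. The expression computed above for $d\mathcal{G}_2/dt$ is the sum of two non-positive algebraic terms and two non-positive Dirichlet-type gradient terms, so $d\mathcal{G}_2/dt\leq 0$. The invariance condition $d\mathcal{G}_2/dt=0$ forces simultaneously $S\equiv S^*$ on $\mathcal{U}$ and $\nabla C\equiv 0$, so that $C$ is spatially homogeneous. Substituting $S=S^*$ into the $S$-equation and using the identity $\Lambda=\beta S^*C^*+\eta_1 S^*$ forces $C\equiv C^*$; the $H$-equation then drives $H\to H^*$, and the linear $R$-equation, with source converging to $\gamma_1 C^*+\gamma_2 H^*$, yields $R\to R^*$. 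Hence the largest invariant set in $\{d\mathcal{G}_2/dt=0\}$ is $\{E^*\}$ and a second application of LaSalle's principle concludes.

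The main obstacle will be the LaSalle step in the PDE context: one must justify that $\omega$-limit sets of bounded orbits are non-empty, compact and positively invariant in the chosen phase space, and that $\mathcal{G}_i$ is continuous along them. Given the regularity already established (Theorem~2 provides uniform bounds in $L^\infty(\Omega_b)$ and $H^1(\mathcal{U})$), this follows from standard results for semilinear reaction-diffusion semigroups as in \cite{Hattaf2013}, but the book-keeping with the gradient terms in $d\mathcal{G}_2/dt$ -- in particular extracting spatial constancy of both $S$ and $C$ on the invariant set -- requires care, and is where I expect most of the technical work to lie.
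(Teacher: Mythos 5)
Your proposal follows essentially the same route as the paper: the same Lyapunov functionals $\mathcal{G}_1$ and $\mathcal{G}_2$ with the same derivative estimates, concluded by LaSalle's Invariance Principle. In fact you supply more than the paper does — the explicit characterisation of the largest invariant sets (and the precompactness caveat) is omitted there — though note that both you and the paper tacitly use the pointwise bound $S\leq \Lambda/\eta_1$ in the $\mathcal{R}_0<1$ case, which strictly speaking needs a comparison argument (or a restriction to the attracting region) to justify.
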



\section{Extended cocaine-heroin SCHR model}\label{S7}
The main objective of this section is to present an extension of the proposed cocaine-heroin SCHR model \eqref{E2.1}, cf.\ Figure~\ref{F1}, and to derive the main results associated with it. Based on the parameters mentioned in Table~\ref{Tab1}, Table~\ref{Tab2} lists the additional parameters for the new model, detailing their respective roles and significance.

\begin{table}[hbtp]
\centering
\setlength{\tabcolsep}{0.5cm}
\caption{Additional transmission coefficients for the extended cocaine-heroin SCHR model.}\label{Tab2}
\begin{tabular}{cc}
\hline 
\quad Symbol \quad & \quad Description \quad \\
\hline \hline 
$\eta_5$ and $\eta_6$ & Death rates of $U_c$ and $U_h$ respectively\\ 
\hline
$d$ & Diffusion rate of $S$, $C$, $U_c$, $H$, $U_h$, and $R$\\
\hline
$\gamma_3$ and $\gamma_4$ & Natural recovery rates of $U_c$ and $U_h$ respectively\\
\hline
$\mu_1$ & Progression rate to $U_c$ from $C$\\
\hline
$\kappa_1$ & Progression rate to $U_h$ from $H$\\
\hline
$\mu_2$ & Progression rate to $C$ from $U_c$\\
\hline
$\kappa_2$ & Progression rate to $H$ from $U_h$\\
\hline
\end{tabular}
\end{table}

Figure~\ref{F2} provides a summary of how the epidemic spreads from one compartment to another, where $U_c$ and $U_h$ are the number of cocaine and heroin users undergoing treatment, respectively.

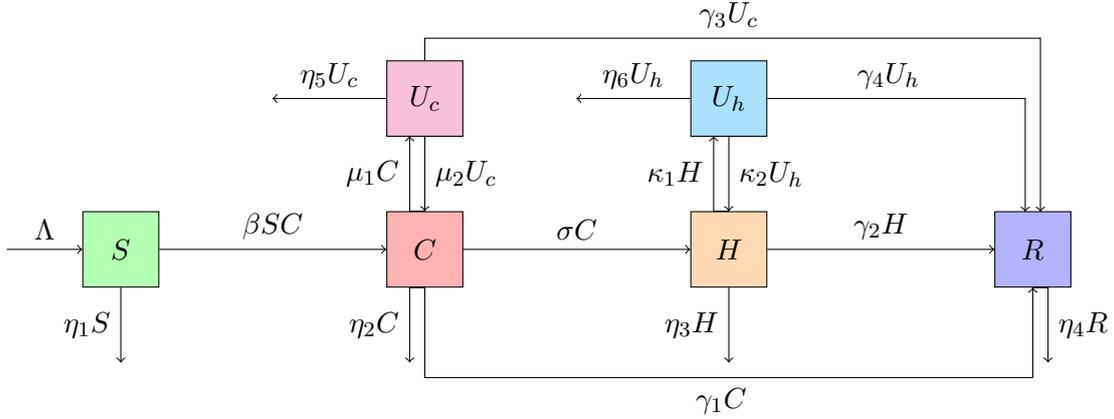
\begin{figure}[hbtp]
\centering
\begin{tikzpicture}[node distance=4cm]
\node (S) [rectangle, draw, minimum size=1cm, fill=green!30] {$S$}; 
\node (C) [rectangle, draw, minimum size=1cm, fill=red!30, right of=S] {$C$};
\node (UC) [rectangle, draw, minimum size=1cm, fill=magenta!30, above of = C, yshift=-2cm] {$U_c$};
\node (H) [rectangle, draw, minimum size=1cm, fill=orange!30, right of=C] {$H$};
\node (UH) [rectangle, draw, minimum size=1cm, fill=cyan!30, above of=H, yshift=-2cm] {$U_h$};
\node (R) [rectangle, draw, minimum size=1cm, fill=blue!30, right of=H] {$R$}; 
\draw[->] (-1.5,0) -- ++(S) node[midway,above]{$\Lambda$};
\draw [->] (S) -- (C) node[midway,above]{$\beta SC$};
\draw [->] (C) -- (H) node[midway,above]{$\sigma C$};
\draw [->] (H) -- (R) node[midway,above]{$\gamma_2 H$};
\draw [->] (C.south) -| (4,-1.7) -- (11.8,-1.7) node[midway,below]{$\gamma_1 C$} -| (R.south);
\draw[->] (C.north) -| (3.8,1.5) node[near end,left]{$\mu_1 C$};
\draw[->] (UC.south) -| (C.north) node[near end,right]{$\mu_2 U_c$};
\draw [->] (UC.north) -| (4,2.8) -- (12,2.8) node[above, xshift=-4cm]{$\gamma_3 U_c$} -| (12.1,0.5);
\draw[->] (H.north) -| (7.8,1.5) node[near end,left]{$\kappa_1 H$};
\draw[->] (UH.south) -| (H.north) node[near end,right]{$\kappa_2 U_h$};
\draw [->] (UH) -| (11.9,2) -- (11.9,2) node[above, xshift=-1.8cm]{$\gamma_4 U_h$} -| (11.9,0.5);
\draw[->] (S.south) -| (0,-1.5) node[near end,left]{$\eta_1 S$};
\draw[->] (C.south) -| (3.8,-1.5) node[near end,left]{$\eta_2 C$};
\draw[->] (H.south) -| (8,-1.5) node[near end,left]{$\eta_3 H$};
\draw[->] (R.south) -| (12.2,-1.5) node[near end,right]{$\eta_4 R$};
\draw[->] (UC.west) -- ++(-1.5,0) node[midway,above]{$\eta_5 U_c$};
\draw[->] (UH.west) -- ++(-1.5,0) node[midway,above]{$\eta_6 U_h$};
\end{tikzpicture}
\captionof{figure}{Transfer diagram for the extended cocaine-heroin SCHR model.}\label{F2}
\end{figure}

Given the assumptions outlined above, the extended reaction-diffusion cocaine-heroin SCHR model can be formulated as follows
\begin{equation}\label{E7.1}
\left\{\begin{aligned}
\partial_t S - d \Delta S &= \Lambda - \beta S C - \eta_1 S, \\
\partial_t C - d \Delta C &= \beta S C - (\eta_2+\sigma+\gamma_1) C + \mu_2 U_c - \mu_1 C,\\ 
\partial_t U_c - d \Delta U_c &= \mu_1 C - (\mu_2 + \eta_5 + \gamma_3) U_c,\\ 
\partial_t H - d \Delta H &= -(\eta_3+\gamma_2) H + \sigma C + \kappa_2 U_h - \kappa_1 H,\\ 
\partial_t U_h - d \Delta U_h &= \kappa_1 H - (\kappa_2 + \eta_6 + \gamma_4) U_h,\\ 
\partial_t R - d \Delta R &= \gamma_1 C + \gamma_2 H - \eta_4 R + \gamma_3 U_c + \gamma_4 U_h,
\end{aligned}\right. \ \text{ in } \Omega_b, 
\end{equation}
with the no-flux boundary conditions 
\begin{equation}\label{E7.2} 
  \frac{\partial S}{\partial \nu} = \frac{\partial C}{\partial \nu} 
   = \frac{\partial U_c}{\partial \nu} = \frac{\partial H}{\partial \nu} 
   = \frac{\partial U_h}{\partial \nu} = \frac{\partial R}{\partial \nu} = 0, \ \text{ on } \Sigma_b,
\end{equation}
and the initial conditions
\begin{equation}\label{E7.3} 
\begin{aligned}
   &S(0,x)=S^0, \ C(0,x)=C^0, \ U_c(0,x)=U_c^0,\\
   &H(0,x)=H^0, \ U_h(0,x)=U_h^0, \ R(0,x)=R^0,
\end{aligned} \quad \text{ in } \mathcal{U}.
\end{equation}

\subsection{Existence of the strong solution}

Let $\vartheta = (\vartheta_1, \vartheta_2, \vartheta_3, \vartheta_4, \vartheta_5, \vartheta_6) = (S, C, U_c, H, U_h, R)$, 
$\vartheta^0 =( \vartheta^0_1, \vartheta^0_2, \vartheta^0_3, \vartheta^0_4, \vartheta^0_5, \vartheta^0_6)$, 
and $\mathbb{W}(\mathcal{U}) = \bigl( L^2(\mathcal{U}) \bigr)^6$. 
The extended problem \eqref{E7.1}--\eqref{E7.3} can be reformulated in $\mathbb{W}( \mathcal{U})$ as follows
\begin{equation*}
\begin{cases}
   \partial_t \vartheta(t) + \mathcal{A}\vartheta(t) = \tilde{\xi}(\vartheta(t)),\\
    \vartheta(0)=\vartheta^0,
\end{cases} \ t\in[0, b],
\end{equation*}
where
\begin{equation*}
     \mathcal{A}\colon \ 
\begin{aligned}
     &\mathcal{D}_\mathcal{A} 
     = \Big\{\vartheta \in \bigl( H^2(\mathcal{U}) \bigr)^6 \mid \ \frac{\partial \vartheta_i}{\partial \nu} = 0, 1\le i\le 6\Big\} \subset \mathbb{W}(\mathcal{U}) \longrightarrow \mathbb{W}(\mathcal{U}),\\ 
&\vartheta \longrightarrow -d\Delta \vartheta = (-d\Delta \vartheta_i)_{i=1, \dots, 6},
\end{aligned}
\end{equation*}
and
\begin{equation*}
\tilde{\xi}(\vartheta(t))=
\begin{pmatrix}
\Lambda - \beta \vartheta_1\vartheta_2 - \eta_1 \vartheta_1\\
\beta \vartheta_1\vartheta_2 + \mu_2 \vartheta_3 - (\eta_2 + \sigma + \gamma_1 + \mu_1)\vartheta_2\\
\mu_1 \vartheta_2 - (\eta_5 + \gamma_3 + \mu_2)\vartheta_3\\
\sigma \vartheta_2 + \kappa_2 \vartheta_5 - (\eta_3 + \gamma_2 + \kappa_1)\vartheta_4\\
\kappa_1 \vartheta_4 - (\eta_6 + \gamma_4 + \kappa_2)\vartheta_5\\
\gamma_1\vartheta_2 + \gamma_2\vartheta_4 + \gamma_3\vartheta_3 + \gamma_4\vartheta_5 - \eta_4\vartheta_6
\end{pmatrix}, \quad t\in [0, b].
\end{equation*}
Using the same techniques described in Section~\ref{S3}, we can conclude the following theorem.

\begin{theorem}
The system \eqref{E7.1}--\eqref{E7.3} has a unique positive bounded strong solution $\vartheta\in W^{1,2} (0, T ; \mathbb{W}(\mathcal{U}))$. 
Furthermore, for each $i\in\{1, \dots, 6\}$ we get
\begin{equation*}
   \vartheta_i\in L^2\bigl(0, T ; H^2(\mathcal{U})\bigr) \cap 
   L^\infty\bigl(0, T ; H^1(\mathcal{U})\bigr) \cap L^\infty\bigl(\Omega_b\bigr).
\end{equation*}
\end{theorem}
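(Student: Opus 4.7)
The plan is to adapt the three-stage argument from Section~\ref{S3} to the six-component system: invoke an abstract existence result for the strong solution and its Sobolev regularity, prove componentwise $L^\infty$ boundedness by semigroup comparison, and prove non-negativity by $L^2$ energy estimates on the negative parts. First, for existence, uniqueness and the basic regularity I would verify that $\tilde\xi$ is Lipschitz continuous in $\vartheta$ uniformly in $t\in[0,b]$ on bounded subsets of $\mathbb{W}(\mathcal{U})$: outside the single bilinear term $\beta\vartheta_1\vartheta_2$ the reaction is affine, so Lipschitz continuity reduces to a local $L^\infty$ bound supplied by the boundedness step below. Since $-\Delta$ with Neumann data is strongly elliptic on $\mathbb{W}(\mathcal{U})$, the results of \cite{Barbu1994,Pazy1983} yield a unique strong solution $\vartheta\in W^{1,2}(0,T;\mathbb{W}(\mathcal{U}))$ with $\vartheta_i\in L^2(0,T;H^2(\mathcal{U}))\cap L^\infty(0,T;H^1(\mathcal{U}))$ for each $i$.

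For boundedness in $L^\infty(\Omega_b)$, I would run the semigroup comparison of Section~\ref{S3} componentwise. For each $i\in\{1,\dots,6\}$ let $\{\mathcal{S}_i(t)\}_{t\geq 0}$ denote the $C_0$-semigroup generated by the scalar operator $\mathcal{B}_i\vartheta=d\Delta\vartheta_i$ and set $\delta_i=\max\{\|\tilde\xi_i\|_{L^\infty(\Omega_b)},\|\vartheta_i^0\|_{L^\infty(\mathcal{U})}\}$. The auxiliary pair $\phi_i=\vartheta_i-\delta_i t-\|\vartheta_i^0\|_{L^\infty(\mathcal{U})}$ and $\widetilde\phi_i=\vartheta_i+\delta_i t+\|\vartheta_i^0\|_{L^\infty(\mathcal{U})}$ satisfies scalar Cauchy problems whose mild representations are easily shown to be non-positive, respectively non-negative, giving the pointwise estimate $|\vartheta_i(t,x)|\leq\delta_i t+\|\vartheta_i^0\|_{L^\infty(\mathcal{U})}$ on $\Omega_b$.

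The genuinely new step is non-negativity, since the pairs $(\vartheta_2,\vartheta_3)$ and $(\vartheta_4,\vartheta_5)$ are coupled through the relapse terms $+\mu_2\vartheta_3,+\mu_1\vartheta_2$ and $+\kappa_2\vartheta_5,+\kappa_1\vartheta_4$, so the scalar Gronwall trick of Section~\ref{S3} cannot handle them in isolation. The idea is to test the $\vartheta_2$-equation against $\vartheta_2^-$ and the $\vartheta_3$-equation against $\vartheta_3^-$, add the resulting identities, drop the non-positive diffusion and absorption terms, and use the $L^\infty$ bound on $\vartheta_1$ together with the elementary inequalities $-\vartheta_3\vartheta_2^-\leq\vartheta_3^-\vartheta_2^-\leq\frac{1}{2}\bigl((\vartheta_2^-)^2+(\vartheta_3^-)^2\bigr)$ and its symmetric counterpart for $-\vartheta_2\vartheta_3^-$, to obtain
\begin{equation*}
\frac{d}{dt}\bigl(\|\vartheta_2^-\|_{L^2(\mathcal{U})}^2+\|\vartheta_3^-\|_{L^2(\mathcal{U})}^2\bigr)\leq C\bigl(\|\vartheta_2^-\|_{L^2(\mathcal{U})}^2+\|\vartheta_3^-\|_{L^2(\mathcal{U})}^2\bigr).
\end{equation*}
Since the initial negative parts vanish, Gronwall's inequality forces $\vartheta_2,\vartheta_3\geq 0$ on $\Omega_b$; the identical pairwise test on $(\vartheta_4,\vartheta_5)$ then yields $\vartheta_4,\vartheta_5\geq 0$. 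Non-negativity of $\vartheta_1$ and $\vartheta_6$ finally follows from the quasi-monotone criterion of \cite{Smoller2012} as in Section~\ref{S3}, since $\tilde\xi_1|_{\vartheta_1=0}=\Lambda\geq 0$ and $\tilde\xi_6|_{\vartheta_6=0}=\gamma_1\vartheta_2+\gamma_3\vartheta_3+\gamma_2\vartheta_4+\gamma_4\vartheta_5\geq 0$ whenever the other components are non-negative.

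The main obstacle is precisely this pairwise non-negativity step: the relapse fluxes $\mu_2 U_c$ and $\kappa_2 U_h$ break the one-component-at-a-time quasi-monotone structure used in Section~\ref{S3}, and force one to work with the coupled Gronwall functional $\|\vartheta_2^-\|_{L^2(\mathcal{U})}^2+\|\vartheta_3^-\|_{L^2(\mathcal{U})}^2$ rather than each negative part separately. Once non-negativity and $L^\infty$ bounds are in hand, the refined energy estimate $\|\partial_t\vartheta_i\|_{L^2(\Omega_b)}+\|\vartheta_i\|_{L^2(0,T;H^2(\mathcal{U}))}+\|\vartheta_i\|_{H^1(\mathcal{U})}<\infty$ follows routinely by multiplying each equation by $\partial_t\vartheta_i$, integrating by parts using the Neumann boundary condition, and invoking the already-established bounds on $\vartheta_i^0\in H^2(\mathcal{U})$, exactly as in the second theorem of Section~\ref{S3}.
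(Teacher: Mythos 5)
Your proposal is correct and follows the same three-stage strategy (abstract existence via \cite{Barbu1994,Pazy1983}, semigroup comparison for $L^\infty(\Omega_b)$ bounds, negative-part energy estimates for non-negativity) that the paper intends: for this theorem the paper offers no written proof at all, only the remark that the ``same techniques'' as in Section~\ref{S3} apply. What you add is genuinely needed and not merely cosmetic: the relapse terms $\mu_2 U_c$ in the $C$-equation and $\kappa_2 U_h$ in the $H$-equation mean that the paper's one-component-at-a-time Gronwall argument does not carry over verbatim, since testing the $\vartheta_2$-equation against $\vartheta_2^-$ produces the sign-indefinite term $-\mu_2\int_{\mathcal{U}}\vartheta_3\vartheta_2^-\,dx$ before $\vartheta_3\geq 0$ is known. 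Your coupled functional $\|\vartheta_2^-\|_{L^2(\mathcal{U})}^2+\|\vartheta_3^-\|_{L^2(\mathcal{U})}^2$ together with $-\vartheta_3\vartheta_2^-\leq\vartheta_3^-\vartheta_2^-\leq\tfrac12\bigl((\vartheta_2^-)^2+(\vartheta_3^-)^2\bigr)$ closes this gap cleanly, and the subsequent treatment of $(\vartheta_4,\vartheta_5)$ (where $\sigma\vartheta_2$ is by then known non-negative) and of $\vartheta_1,\vartheta_6$ via the quasi-positivity criterion of \cite{Smoller2012} is sound. The only caveat, inherited from the paper's own Section~\ref{S3} rather than introduced by you, is the mild circularity between the local Lipschitz property of $\tilde\xi$ (which holds only on bounded sets because of the bilinear term $\beta\vartheta_1\vartheta_2$) and the a priori $L^\infty$ bound used to globalize the solution; a fully rigorous write-up would state this as a local-existence-plus-continuation argument.
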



\subsection{Stability analysis}
The drug-free equilibrium for the given system \eqref{E7.1} is obtained by setting the right-hand sides of the equations to zero and solving for the state variables when there are no drugs present in the system. 
This means that the concentrations of drug users ($C$, $U_c$, $H$, $U_h$) are zero. 
Thus, the drug-free equilibrium $E_f$ is
\begin{equation*}
    E_f = \Bigl( \frac{\Lambda}{\eta_1}, 0, 0, 0, 0, 0 \Bigr).
\end{equation*}
Following the same steps as in Section~\ref{S4}, using the transition matrix $\tilde{\mathcal{K}}$ and the transmission matrix $\tilde{\mathcal{T}}$ at point $E_f$, the basic reproduction number for the system \eqref{E7.1} is given by
\begin{equation*}
    \mathcal{R}_0 
    = \operatorname{trace}(-\tilde{\mathcal{T}}\tilde{\mathcal{K}}^{-1}) 
    = \frac{\beta \Lambda}{\eta_1 (\eta_2 + \sigma + \gamma_1 + \mu_1)}.
\end{equation*}
To find the drug-addiction equilibrium of the given system, we must set the time derivatives to zero and solve the resulting algebraic equations for the equilibrium values $S^*$, $C^*$, $U_c^*$, $H^*$, $U_h^*$, $R^*$. 
Therefore, the drug-addiction equilibrium $E^*$ is
\begin{equation*}
   E^* = (S^*, C^*, U_c^*, H^*, U_h^*, R^*),
\end{equation*}
where
\begin{align*}
S^* &= \frac{\Lambda}{\eta_1 \mathcal{R}_0} - \frac{\mu_1 \mu_2}{\beta(\mu_2 + \eta_5 + \gamma_3)}, \\
C^* &= \frac{\Lambda - \eta_1 S^*}{\beta S^*},\\
U_c^* &= \frac{\mu_1 C^*}{\mu_2 + \eta_5 + \gamma_3},\\
H^* &= \frac{\sigma(\kappa_2 + \eta_6 + \gamma_4) C^*}{(\kappa_1 + \eta_3 + \gamma_2)(\kappa_2 + \eta_6 + \gamma_4) - \kappa_1\kappa_2},\\
U_h^* &= \frac{\kappa_1 H^*}{\kappa_2 + \eta_6 + \gamma_4},\\
R^* &= \frac{\gamma_1 C^* + \gamma_2 H^* + \gamma_3 U_c^* + \gamma_4 U_h^*}{\eta_4}.
\end{align*}
An important observation is that the variable $R$ does not appear in the first five equations of \eqref{E7.1}. 
This allows us to focus our analysis on the following subsystem
\begin{equation}\label{E7.4}
\left\{\begin{aligned}
&\begin{aligned}
&\partial_t S - d \Delta S = \Lambda - \beta S C - \eta_1 S, \\
&\partial_t C - d \Delta C = \beta S C - (\eta_2+\sigma+\gamma_1) C + \mu_2 U_c - \mu_1 C,\\ 
&\partial_t U_c - d \Delta U_c = \mu_1 C - (\mu_2 + \eta_5 + \gamma_3) U_c,\\ 
&\partial_t H - d \Delta H = -(\eta_3+\gamma_2) H + \sigma C + \kappa_2 U_h - \kappa_1 H,\\ 
&\partial_t U_h - d \Delta U_h = \kappa_1 H - (\kappa_2 + \eta_6 + \gamma_4) U_h,\\
\end{aligned} \quad \text{ in } \Omega_b,\\
&\frac{\partial S}{\partial \nu} = \frac{\partial C}{\partial \nu} = \frac{\partial U_c}{\partial \nu} = \frac{\partial H}{\partial \nu} = \frac{\partial U_h}{\partial \nu} = 0, \quad \text{ on } \Sigma_b,\\ \vspace{0.15cm}
&\bigl(S(0,x), C(0,x), U_c(0,x), H(0,x), U_h(0,x)\bigr) 
 = \bigl(S^0, C^0, U_c^0, H^0, U_h^0\bigr), \quad \text{ in } \mathcal{U}.
\end{aligned}\right.
\end{equation}


\subsubsection{Global stability of the drug-free equilibrium}
Consider the Lyapunov function 
\begin{equation*}
   \tilde{\mathcal{F}}(\vartheta) = C + \alpha_1 U_c + \alpha_2 H + \alpha_3 U_h ,
\end{equation*} 
where $ \alpha_1, \alpha_2, \alpha_3 $ are positive constants to be determined. By computing the time derivative of $\tilde{\mathcal{F}}$ along the trajectories of the system, we get
\begin{equation*}
\begin{split}
\frac{d \tilde{\mathcal{F}}}{d t}
&= \nabla \tilde{\mathcal{F}}(\vartheta) \cdot \tilde{\xi}(\vartheta)\\
&= \beta S C - (\eta_2 + \sigma + \gamma_1) C + \mu_2 U_c - \mu_1 C 
   + \alpha_1 \bigl( \mu_1 C - (\mu_2 + \eta_5 + \gamma_3) U_c \bigr) \\
&\quad + \alpha_2 \bigl( -(\eta_3 + \gamma_2) H + \sigma C + \kappa_2 U_h - \kappa_1 H \bigr) 
+ \alpha_3 \bigl( \kappa_1 H - (\kappa_2 + \eta_6 + \gamma_4) U_h \bigr).
\end{split}
\end{equation*} 
Afterwards,
\begin{equation*}
\begin{split}
\frac{d \tilde{\mathcal{F}}}{d t} 
&= \bigl(\beta S - (\eta_2 + \sigma + \gamma_1 + \mu_1)\bigr) C 
   + \bigl(\mu_2 - \alpha_1 (\mu_2 + \eta_5 + \gamma_3)\bigr) U_c \\
&\quad + \bigl(\sigma - \alpha_2 (\eta_3 + \gamma_2) + \alpha_2 \kappa_2\bigr) C
  + \bigl(\alpha_2 \kappa_2 - \alpha_3 (\kappa_2 + \eta_6 + \gamma_4)\bigr) U_h.
\end{split}
\end{equation*} 
Then, a Lyapunov functional associated with \eqref{E7.4} at the drug-free equilibrium $E_f$ is defined by
\begin{equation*}
   \tilde{\mathcal{G}} = \int_{\mathcal{U}} \tilde{\mathcal{F}}(\vartheta(t, x)) \,dx.
\end{equation*} 
Then, using similar techniques outlined in \cite{Hattaf2013}, we have
\begin{equation*}
\begin{split}
\frac{d \tilde{\mathcal{G}}}{d t}
&= \int_{\mathcal{U}} \Big[\bigl(\beta S - (\eta_2 + \sigma + \gamma_1 + \mu_1)\bigr) C 
+ \bigl(\mu_2 - \alpha_1 (\mu_2 + \eta_5 + \gamma_3)\bigr) U_c \\
&\quad +  \bigl(\sigma - \alpha_2 (\eta_3 + \gamma_2) + \alpha_2 \kappa_2\bigr) C
+ \bigl(\alpha_2 \kappa_2 - \alpha_3 (\kappa_2 + \eta_6 + \gamma_4)\bigr) U_h \Big] \,dx\\
&\quad - d S^f \int_{\mathcal{U}} \frac{|\nabla S|^2}{S^2}\,dx,
\end{split}
\end{equation*} 
where $S^f = \frac{\Lambda}{\eta_1}$. We can now choose $\alpha_1$, $\alpha_2$, $\alpha_3 $ so that all coefficients of $C$, $U_c$, $H$ and $U_h$ are negative.
Thus, $\frac{d \tilde{\mathcal{G}}}{d t} < 0$.
By LaSalle's Invariance Principle, $E_f$ is globally asymptotically stable if $\mathcal{R}_0 < 1$.


\subsubsection{Global stability of the drug-addiction equilibrium}
To show that the drug-addiction equilibrium $E^*$ is globally asymptotically stable when $\mathcal{R}_0 > 1$, we use the Lyapunov function
\begin{equation*}
  \hat{\mathcal{F}}(\vartheta) 
  = (S - S^*)^2 + (C - C^*)^2 + (U_c - U_c^*)^2 + (H - H^*)^2 + (U_h - U_h^*)^2.
\end{equation*} 
Then,
\begin{equation*}
\frac{d \hat{\mathcal{F}}}{dt} 
= 2(S - S^*) \frac{\partial S}{\partial t} + 2(C - C^*) \frac{\partial C}{\partial t} + 2(U_c - U_c^*) \frac{\partial U_c}{\partial t} + 2(H - H^*) \frac{\partial H}{\partial t} + 2(U_h - U_h^*) \frac{\partial U_h}{\partial t}.
\end{equation*} 
The associated ODE system of \eqref{E7.4}, yields
\begin{equation*}
\begin{split}
\frac{d \hat{\mathcal{F}}}{dt} 
&= 2(S - S^*) ( \Lambda - \beta S C - \eta_1 S) + 2(C - C^*)
   \bigl( \beta S C - (\eta_2 + \sigma + \gamma_1 + \mu_1) C + \mu_2 U_c \bigr) \\
&\quad + 2(U_c - U_c^*) \bigl( \mu_1 C - (\mu_2 + \eta_5 + \gamma_3) U_c \bigr)
+ 2(H - H^*) \bigl( \sigma C + \kappa_2 U_h - (\eta_3 + \gamma_2 + \kappa_1) H \bigr) \\
& \quad + 2(U_h - U_h^*) \left( \kappa_1 H - (\kappa_2 + \eta_6 + \gamma_4) U_h \right).
\end{split}
\end{equation*} 
Since
\begin{align*}
\Lambda &= \beta S^* C^* + \eta_1 S^*, \\
\beta S^* C^* &= (\eta_2 + \sigma + \gamma_1 + \mu_1) C^* - \mu_2 U_c^*, \\
\mu_1 C^* &= (\mu_2 + \eta_5 + \gamma_3) U_c^*, \\
\sigma C^* + \kappa_2 U_h^* &= (\eta_3 + \gamma_2 + \kappa_1) H^*, \\
\kappa_1 H^* &= (\kappa_2 + \eta_6 + \gamma_4) U_h^*,
\end{align*}
we obtain
\begin{equation*}
\begin{split}
\frac{d \hat{\mathcal{F}}}{dt} 
&= - 2(S - S^*) \bigl( \beta (S C - S^* C^*) + \eta_1 (S - S^*) \bigr)\\
&\quad + 2(C - C^*) \bigl( \beta (S C - S^* C^*) + \mu_2 (U_c - U_c^*) - (\eta_2 + \sigma + \gamma_1 + \mu_1) (C - C^*) \bigr) \\
&\quad + 2(U_c - U_c^*) \bigl( \mu_1 (C - C^*) - (\mu_2 + \eta_5 + \gamma_3) (U_c - U_c^*) \bigr)\\
& \quad + 2(H - H^*) \bigl( \sigma (C - C^*) + \kappa_2 (U_h - U_h^*) - (\eta_3 + \gamma_2 + \kappa_1) (H - H^*) \bigr) \\
&\quad + 2(U_h - U_h^*) \bigl( \kappa_1 (H - H^*) - (\kappa_2 + \eta_6 + \gamma_4) (U_h - U_h^*) \bigr).
\end{split}
\end{equation*} 
So,
\begin{equation*}
    \hat{\mathcal{G}} = \int_{\mathcal{U}} \hat{\mathcal{F}}\bigl(\vartheta(t, x)\bigr) \,dx.
\end{equation*} 
is a Lyapunov functional associated with \eqref{E7.4} at the drug-addiction equilibrium $E^*$.
Then we have,
\begin{equation*}
\begin{split}
\frac{d \hat{\mathcal{G}}}{d t} 
&= \int_{\mathcal{U}} \Big[ -2(S - S^*) \bigl( \beta (S C - S^* C^*) + \eta_1 (S - S^*) \bigr)\\
&\qquad + 2(C - C^*) \bigl( \beta (S C - S^* C^*) + \mu_2 (U_c - U_c^*) - (\eta_2 + \sigma + \gamma_1 + \mu_1) (C - C^*) \bigr) \\
&\qquad + 2(U_c - U_c^*) \bigl( \mu_1 (C - C^*) - (\mu_2 + \eta_5 + \gamma_3) (U_c - U_c^*) \bigr)\\
&\qquad + 2(H - H^*) \bigl( \sigma (C - C^*) + \kappa_2 (U_h - U_h^*) - (\eta_3 + \gamma_2 + \kappa_1) (H - H^*) \bigr) \\
&\qquad + 2(U_h - U_h^*) \bigl( \kappa_1 (H - H^*) - (\kappa_2 + \eta_6 + \gamma_4) (U_h - U_h^*) \bigr) \Big] \,dx\\
&\quad - d S^* \int_{\mathcal{U}} \frac{|\nabla S|^2}{S^2}\,dx 
       - d C^* \int_{\mathcal{U}} \frac{|\nabla C|^2}{C^2}\,dx 
       - d U_c^* \int_{\mathcal{U}} \frac{|\nabla U_c|^2}{U_c^2}\,dx\\
&\quad - d H^* \int_{\mathcal{U}} \frac{|\nabla H|^2}{H^2}\,dx 
       - d U_h^* \int_{\mathcal{U}} \frac{|\nabla U_h|^2}{U_h^2}\,dx.
\end{split}
\end{equation*} 
We can see that each term involving deviations from the equilibrium values is either negative or zero, which guarantees that $\frac{d \hat{\mathcal{G}}}{d t} \leq 0$. Therefore, $E^*$ is globally asymptotically stable if $\mathcal{R}_0 > 1$.


\section{Numerical results}\label{S8}
The main objective of this section is to present approximate and numerical results derived from the proposed epidemiological models \eqref{E2.1}--\eqref{E2.2} and \eqref{E7.1}--\eqref{E7.3}. 
The provided mathematical model \eqref{E2.1} (or the extended one \eqref{E7.1}) was solved numerically using a finite difference method implemented in MatLab with a single spatial variable $x$ and time $t$. 
On the first day of drug use, we assume that susceptible individuals are evenly distributed over the studied area. 
Our numerical simulations used some specific parameter values and initial data from \cite{din2020controlling, Duan2020, wang2011dynamics, zhang2020stability, Zinihi2024MM}, as detailed in Tables~\ref{Tab3} and \ref{Tab4}.

\begin{table}[hbtp]
\centering
\setlength{\tabcolsep}{0.45cm}
\caption{Initial conditions and parameter values of the proposed cocaine-heroin model \eqref{E2.1}--\eqref{E2.2}.}\label{Tab3}
\begin{tabular}{ccc}
\hline Symbol & Description & Value\\
\hline\hline $S^0$ & Initial susceptible individuals & $30$\\ 
\hline $C^0$ & Initial cocaine users & $10$\\ 
\hline $H^0$ & Initial heroin users & $5$\\
\hline $R^0$ & Initial recovered individuals & $0$\\
\hline $\eta_1 = \eta_2 = \eta_3 = \eta_4$ & Natural death rates & $0.01$ (for $\mathcal{R}_0 > 1$)\\ & & $0.03$ (for $\mathcal{R}_0 < 1$)\\
\hline $\sigma$ & Recovery rate of cocaine to heroin users & $0.2$ (for $\mathcal{R}_0 > 1$)\\ & & $0.001$ (for $\mathcal{R}_0 < 1$)\\
\hline $\beta$ & Transmission rate & $0.002$ (for $\mathcal{R}_0 > 1$)\\ & & $0.001$ (for $\mathcal{R}_0 < 1$)\\
\hline $d$ & Diffusion coefficient & $0.1$\\ 
\hline $T$ & Final time & $500$\\
\hline $\gamma_1 = \gamma_2$ & Natural recovery rates of $C$ and $H$ users & $0.05$\\
\hline $\Lambda$ & Recruitment rate of the population & $2.15$\\
\hline
\end{tabular}
\end{table}

In addition to the parameter values and initial data shown in Table~\ref{Tab3}, Table~\ref{Tab4} contains other parameters and initial conditions used to determine the numerical results for our extended model.

\begin{table}[hbtp]
\centering
\setlength{\tabcolsep}{0.45cm}
\caption{Initial conditions and parameter values of the extended cocaine-heroin model \eqref{E7.1}--\eqref{E7.3}.}\label{Tab4}
\begin{tabular}{ccc}
\hline Symbol & Description & Value\\ 
\hline $U_c^0$ & Initial cocaine users undergoing treatment & $3$\\ 
\hline $U_h^0$ & Initial heroin users undergoing treatment & $3$\\
\hline $\eta_5 = \eta_6$ & Natural death rates & $0.01$ \\
\hline $\gamma_3 = \gamma_4$ & Natural recovery rates of $U_c$ and $U_h$ & $0.03$\\
\hline
$\mu_1 = \mu_2$ & Progression rates of $C$ and $U_c$ & $0.01$ (for $\mathcal{R}_0 > 1$)\\ & & $0.05$ (for $\mathcal{R}_0 < 1$)\\
\hline
$\kappa_1 = \kappa_2$ & Progression rates of $H$ and $U_h$ & $0.01$\\
\hline
\end{tabular}
\end{table}

The spatial variable $0\le x\le2$ is discretized into a grid of $M_x + 1$ equidistant points $x_j = k\delta_x$ 
for $j = 0, 1, \dots, M_x$, where $\delta_x = \frac{2}{M_x}$ represents the uniform spatial grid spacing. 
Additionally, the diffusion term has been discretized using the standard second-order difference quotient
\begin{equation*}
    \Delta \phi \approx \frac{\phi_{j+1} - 2\phi_j + \phi_{j-1}}{\delta_x^2},
\end{equation*}
with $d = 0.1$. 
The solution of the proposed system is computed until reaching a steady state (i.e., $T = 500$). 
The grid spacing is defined as $\delta_t = 10^{-2}$.


\subsection{Drug-free equilibrium}
Figures~\ref{F3} and \ref{F4} show the time series of the model solution for the scenario where $\mathcal{R}_0 \leq 1$, indicating the global asymptotic stability of the drug-free equilibrium $E_f$ for both \eqref{E2.1}--\eqref{E2.2} and \eqref{E7.1}--\eqref{E7.3}, respectively.

\begin{figure}[hbtp]
\centering
\includegraphics[scale=0.35]{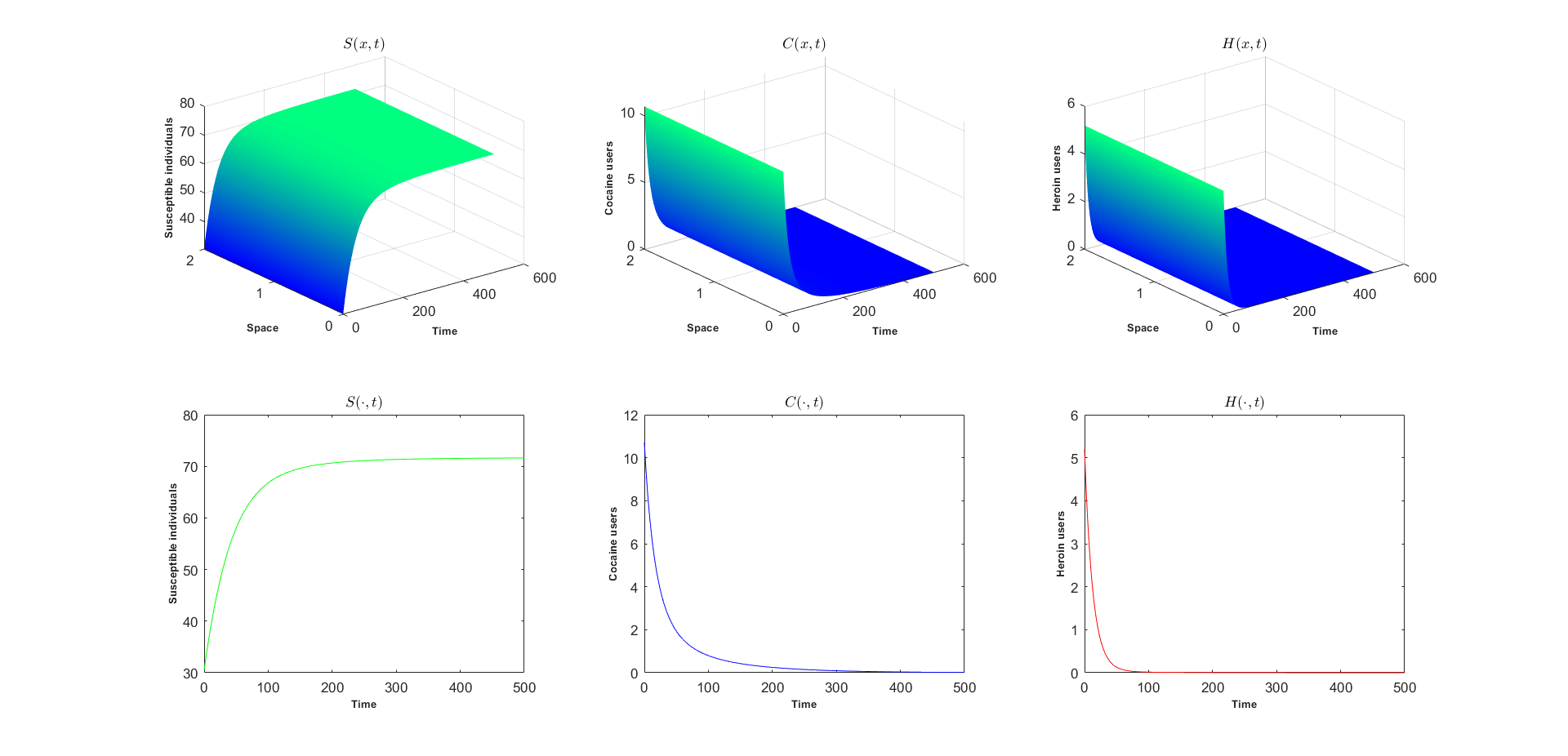}
\caption{Numerical results of \eqref{E2.1}--\eqref{E2.2} when $\eta_1 = \eta_2 = 3\times 10^{-2}$ and $\sigma = \beta = 10^{-3}$ ($\mathcal{R}_0 < 1$).}\label{F3}
\end{figure}

\begin{figure}[hbtp]
\centering
\includegraphics[scale=0.35]{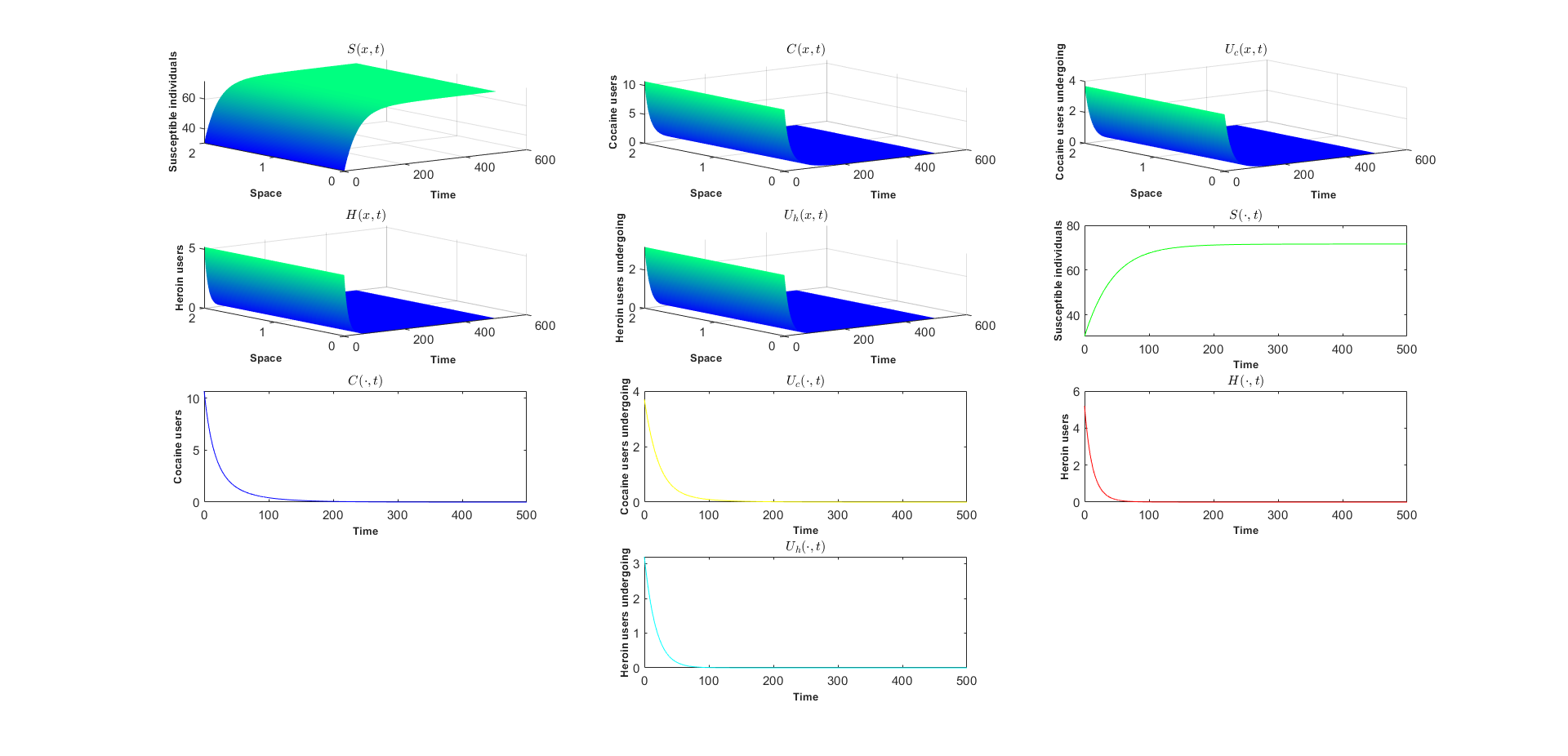}
\caption{Numerical results of \eqref{E7.1}--\eqref{E7.3} when $\eta_1 = \eta_2 = 3\times 10^{-2}$, $\mu_1 = 0.05$ and $\sigma = \beta = 10^{-3}$ ($\mathcal{R}_0 < 1$).}\label{F4}
\end{figure}


\subsection{Drug-addiction equilibrium}
Figures~\ref{F5} and \ref{F6} show the time series of the solution for the case where $\mathcal{R}_0 \ge 1$, illustrating the global asymptotic stability of the drug-addiction equilibrium $E^*$ for both systems \eqref{E2.1}--\eqref{E2.2} and \eqref{E7.1}--\eqref{E7.3}.

\begin{figure}[hbtp]
\centering
\includegraphics[scale=0.35]{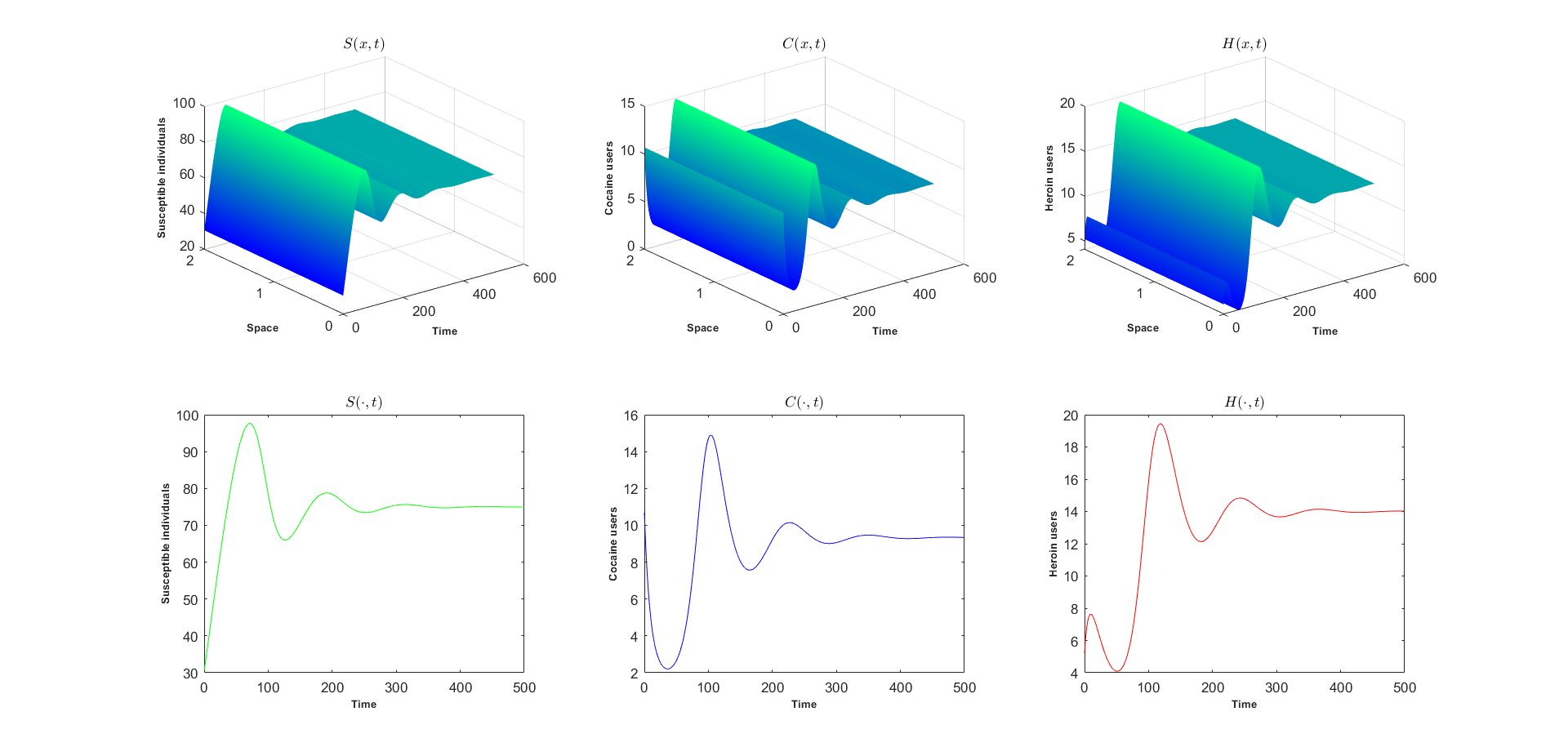}
\caption{Numerical results of \eqref{E2.1}--\eqref{E2.2} when $\eta_1 = \eta_2 = 10^{-2}$, $\sigma = 0.2$, and $\beta = 2 \times 10^{-3}$ ($\mathcal{R}_0 > 1$).}\label{F5}
\end{figure}

\begin{figure}[hbtp]
\centering
\includegraphics[scale=0.35]{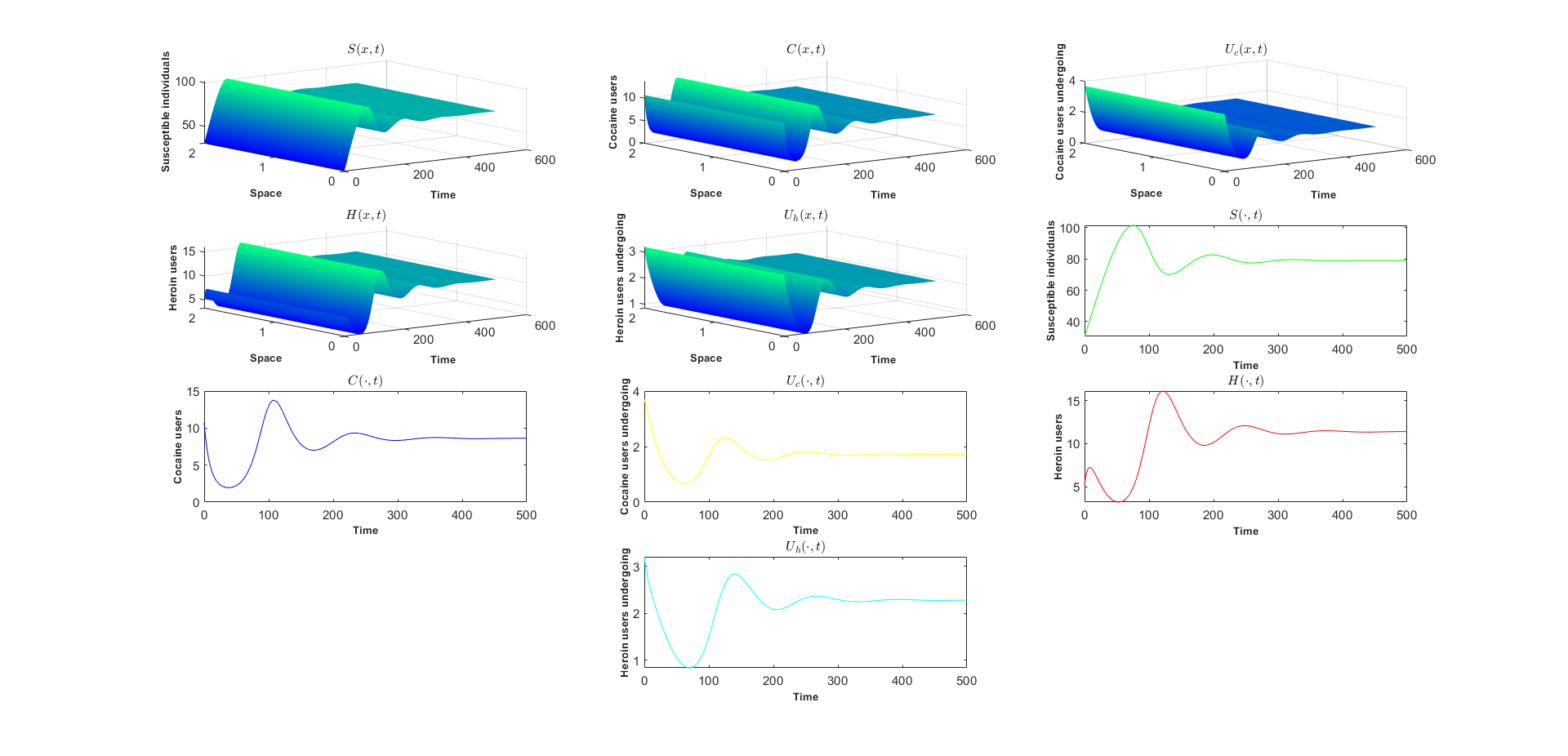}
\caption{Numerical results of \eqref{E7.1}--\eqref{E7.3} when $\eta_1 = \eta_2 = 10^{-2}$, $\sigma = 0.2$, $\mu_1 = 0.01$ and $\beta = 2 \times 10^{-3}$ ($\mathcal{R}_0 > 1$).}\label{F6}
\end{figure}


\section{Conclusion}\label{S9}
This study contributes to the ongoing advances in understanding the spatial diffusion dynamics of epidemiological models. 
Our approach involves the use of a set of PDEs to model the complex interactions within the four compartments. 
To ensure the robustness of our results, we systematically establish the existence, uniqueness, 
and boundedness of a strong non-negative solution to the given problem.
We also provide a comprehensive analysis of the local and global stability of the equilibria. 
Moreover, we introduce and analyze an extended cocaine-heroin model related to the proposed one. 
Additionally, we present a practical application of our methodology through a numerical study focusing on the global stability assessment of a reaction-diffusion SCHR model.

Future research directions will be twofold. First, we will study further extensions of our model, e.g.\ nonlinear incidence functions, time-distributed delay, 
time- or space-fractional models \cite{Zinihi2024MM, Zinihi2024OC}, 
age-multi-group model for heterogeneous population including treatment age, 
stochastic model \cite{Zinihi2024DB}, 
or optimal control \cite{SidiAmmi2023, Zinihi2024MM, Zinihi2024OC}. 
Secondly, we will design a nonstandard scheme
\cite[Section~4]{hoang2024differential} for the numerical solution of our proposed models which guarantees the positivity of the solution, the correct asymptotic behavior and (discrete) equilibrium points. In this respect, our goal will be to calibrate our model with real-world data to obtain realistic solutions.


\section*{Declarations}

\subsection*{Acknowledgments}
AZ, MRSA and AB would like to thank the Deanship of Research and Graduate Studies at King Khalid University
for funding this work through the Small group Research Project under grant number RGP1/120/45.\\
This work is carried out under the supervision of CNRST as part of the PASS program.


\subsection*{CRediT author statement} 

\textit{A. Zinihi:} Conceptualization, 
Methodology, 
Software, 
Formal analysis, 
Investigation, 
Writing -- Original Draft, 
Writing -- Review \& Editing, 
Visualization. 

\textit{M. R. Sidi Ammi:} Conceptualization, 
Methodology, 
Validation, 
Formal analysis, 
Investigation,  
Writing -- Original Draft, 
Writing -- Review \& Editing,  
Supervision.

\textit{M. Ehrhardt:} Validation, 
Formal analysis, 
Investigation, 
Writing -- Original Draft, 
Writing -- Review \& Editing, 
Project administration.

\textit{A. Bachir:} Validation, 
Formal analysis, 
Investigation, 
Writing \& Editing.


\subsection*{Data availability} 
All information analyzed or generated, which would support the results of this work are available in this article.
No data was used for the research described in the article.

\subsection*{Conflict of interest} 
The authors declare that there are no problems or conflicts 
of interest between them that may affect the study in this paper.


\bibliographystyle{acm}
\bibliography{paper}

\end{document}